\newtheorem{theorem}{Theorem}
\newtheorem{acknowledgement}[theorem]{Acknowledgement}
\newtheorem{corollary}[theorem]{Corollary}
\newtheorem{definition}{Definition}
\newtheorem{lemma}{Lemma}
\newtheorem{remark}{Remark}
\newenvironment{proof}[1][Proof]{\noindent\textbf{#1.} }{\ \rule{0.5em}{0.5em}}
\begin{document}

\begin{center}
\textbf{{\Large {Identities and relations related to the numbers of special words derived from special series with Dirichlet convolution}}}

\bigskip

{\Large 
}

Irem KUCUKOGLU$^{1,a}$ and Yilmaz SIMSEK$^{1,b}$

\bigskip

$^{1}$Department of Mathematics, Faculty of Science Akdeniz University of
Akdeniz TR-07058 Antalya, Turkey

E-mail address: $^{a}$iremkucukoglu90@gmail.com, $^{b}$ysimsek@akdeniz.edu.tr

\bigskip
\end{center}

\textbf{{\Large {Abstract }}}\\

The aim of this paper is to define some new number-theoretic functions including necklaces polynomials and the numbers of special words such as Lyndon words. By using Dirichlet convolution formula with well-known number-theoretic functions, we derive some new identities and relations associated with Dirichlet series, Lambert series, and also the family of zeta functions including the Riemann zeta functions and polylogarithm functions.
By using analytic (meromorphic) continuation of zeta functions, we also derive identities and formulas including Bernoulli numbers and Apostol-Bernoulli numbers. Moreover, we give relations between number-theoretic functions and the Fourier expansion of the Eisenstein series. Finally, we give some observations and remarks on these functions.

\noindent \textbf{2000 Mathematics Subject Classification. } 05A15, 11A25, 11B68, 11B83, 11M35, 11M41, 11S40, 30B40, 68R15.

\noindent \textbf{Key Words and Phrases. } Apostol-Bernoulli numbers, Bernoulli numbers, Dirichlet convolution, Dirichlet series, Generating function, Interpolation functions, Lambert series, Lyndon words, Necklace polynomial, Number-theoretic function, Special numbers and polynomials.

\section{Introduction, Definitions and Notations}

In order to define new number-theoretic functions including necklaces polynomials and the numbers of Lyndon words, we need the following definitions and notations:

In the following, let $\mathbb{Z}$, $\mathbb{R}$ and $\mathbb{C}$ be  the set of integers, the set of real numbers and the set of complex numbers, respectively. Let $\mathbb{Z}^{-}=\left\{ -1,-2,-3,...\right\}$, $\mathbb{Z}_{0}^{-}=\mathbb{Z}^{-}\cup \left\{ 0\right\}$, $\mathbb{N}=\{1,2,\dots\}$ and $\mathbb{N}_0=\mathbb{N}\cup\{0\}$. Let $\operatorname{Re}\left(s\right)$ denote the real part of $s\in\mathbb{C}$.

The necklaces polynomials $N_{k}\left( n\right) $ counts the number of
necklaces consisting of $n$-coloured beads with $k$-distinct colours, given
by the following formula (\textit{cf}. \cite{BerstelPerrin2007}):
\begin{equation}
N_{k}\left( n\right) =\frac{1}{n}\sum_{d|n}\phi \left( \frac{n}{d}\right)
k^{d}
\label{NeckNum}
\end{equation}%
where $\phi \left( n\right) $ denotes the number-theoretic Euler totient
function.

Let $k,n\in \mathbb{N}$. Then, the number of the Lyndon words are given as follows:
\begin{equation}
L_{k}\left( n\right) =\frac{1}{n}\sum_{d|n}\mu \left( \frac{n}{d}\right)
k^{d}  \label{LynNum}
\end{equation}
(\textit{cf}. \cite{BerstelPerrin2007}, \cite{KucukogluMJOM}, \cite{Lothaire1997}). Here, $\mu $ denotes the M\"{o}bius function defined by
\begin{equation*}
\mu \left( n\right)=
\begin{cases}
1 & \text{if $n=1$,} \\
\left(-1\right)^r & \text{if $n$ is a product of $r$ distinct primes,} \\
0 & \text{if $n$ is not square-free}
\end{cases}
\end{equation*}
(\textit{cf}. \cite{Apostol1998}).

The numbers $L_{k}\left( n\right) $ appear in many combinatorial problems. According to Betten et al. \cite{BettenEtAl}, these numbers is equal to the number of orbits of any $n$-th order cyclic group of maximal length. The formula (\ref{LynNum}) also calculates the number of monic irreducible  polynomials of degree $n$ over Galois field $GF(k)$ (\textit{cf}. \cite{BettenEtAl}, \cite{Buchanan}).

It is well-known that Lyndon words are the lexicographically (dictionary order) smallest element of the set of conjugate class which is the result of cyclic shifts of the letters in a primitive word. Figure 1 shows how special binary words are represented by the periodic and primitive necklaces consisting of $6$-coloured beads with $2$-distinct colours.

\begin{figure}[H]
	\centering
	\includegraphics[width=0.5\textwidth]{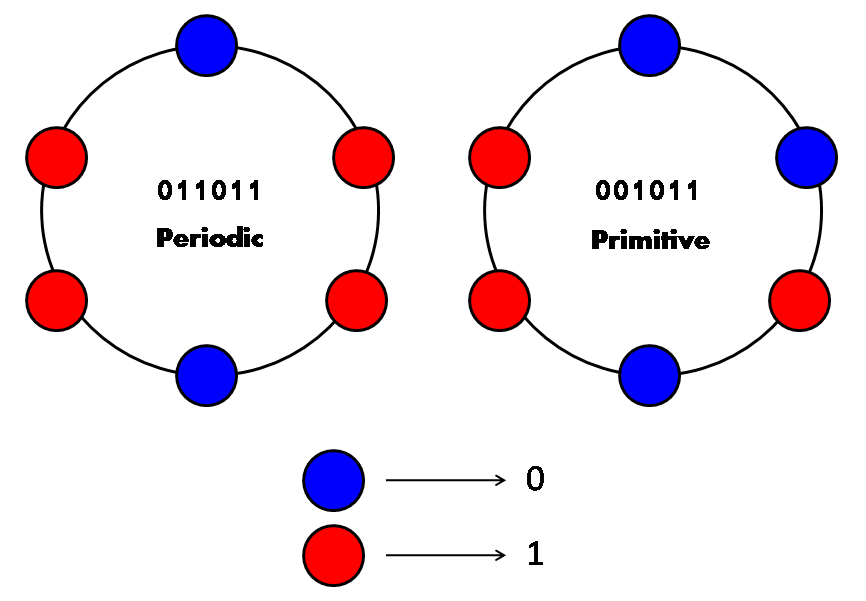}
	\caption{Representing special binary words by a periodic or a primitive necklaces.}
	\label{Representations}
\end{figure}

In Figure 1, we observe that the word $001011$ is primitive and smallest in its conjugacy class. Hence it is a Lyndon word.

\subsection{Number-theoretic functions and their algebraic properties}
Here, we recall algebraic properties of number-theoretic functions which are used in next sections.

Let $f\left(n\right)$ and $g\left(n\right)$ be number-theoretic (arithmetical) functions.

The well-known Dirichlet convolution of the functions $f\left( n\right) $ and $g\left( n\right) $ is given by
\begin{equation*}
\left(f*g\right)\left(n\right)=\sum_{d|n}f\left( d\right) g\left( \frac{n}{d}\right).
\end{equation*}
It is clear that the set of all number-theoretic functions is a commutative ring with unity under pointwise addition and Dirichlet convolutions of these functions (\textit{cf}. \cite{Delany2005}).

We now consider some properties of Dirichlet series, associated with the number-theoretic functions $f\left(n\right)$ and $g\left(n\right)$, which are defined as follows:
\begin{equation}
F\left(s\right)=\sum_{n=1}^{\infty}\frac{f\left(n\right)}{n^s} \quad \text{and} \quad G\left( z\right) =\sum_{n=1}^{\infty }\frac{g\left( n\right) }{n^{z}};
\label{DSeries1}
\end{equation}
these functions converge absolutely for the half-plane $\operatorname{Re}\left(s\right)>a$ and $\operatorname{Re}\left(z\right)>b$ where $a,b\in \mathbb{R}$ denote the abscissas of absolute convergence (\textit{cf}. \cite{Apostol1998}).

These functions are one of the most important useful tools in analytic number theory, probability theory and mathematical physics (\textit{cf}. \cite{Apostol1998}).

For $s$ is an element of the half-plane where both of the series in (\ref{DSeries1}) converge absolutely, the multiplication of $F\left( s\right)$ and $G\left( s\right)$ is given as follows:
\begin{equation}
F\left( s\right) G\left( s\right) =\sum_{n=1}^{\infty }\frac{h\left(
	n\right) }{n^{s}}  \label{DrichletProduct}
\end{equation}%
where $h\left( n\right) $ denotes %
\begin{equation*}
h\left( n\right) =\left(f*g\right)\left(n\right)
\end{equation*}%
(\textit{cf}. \cite[Theorem 11.5, p. 228]{Apostol1998}).

The Lambert series is also associated with the number-theoretic function $f\left(n\right)$, Dirichlet series and also Eisenstein series. The Lambert series is defined by
\begin{equation}
\sum_{n=1}^{\infty }f\left(n\right) \frac{x^{n}}{1-x^{n}}
\label{LSeries}
\end{equation}
and if we assume that (\ref{LSeries}) converges absolutely, then the following holds true
\begin{equation}
\sum_{n=1}^{\infty }f\left(n\right) \frac{x^{n}}{1-x^{n}}=\sum_{n=1}^{\infty }H\left(n\right)x^n
\label{LSeries1}
\end{equation}
where
\begin{equation*}
H\left(n\right)=\sum_{d|n}f\left(d\right) 
\end{equation*}
(\textit{cf}. \cite[p. 24]{Apostol1976}).

Substituting $f\left(n\right)=L_k\left(n\right)$ into (\ref{LSeries1}), after some elementary algebraic computations, for $\left\vert kx\right\vert <1$, one easily gets the following well-known Lambert series which is used in next section.

\begin{equation}
\sum_{n=1}^{\infty }nL_{k}\left( n\right) \frac{x^{n}}{1-x^{n}}=\frac{kx}{%
	1-kx}.  \label{Lambert-Lkn}
\end{equation}

Let $\mathbb{H}=\left\{ z\in \mathbb{C}:\operatorname{Im}\left( z\right) >0\right\} $. For $z\in \mathbb{H}$ and $\operatorname{Re}\left(s\right)>2$, the Eisenstein series $G\left(
z,k,r,h\right) $ is defined as follows (\textit{cf}. \cite{Apostol1976}, \cite{Lewittes}, \cite[Eq. (1.3)]{SimsekASCM2004}%
):%
\begin{equation*}
G\left( z,k,r,h\right) =\sum_{r\neq \left(m,n\right)\in \mathbb{Z}^2} \frac{e^{2\pi i \left(mh_1+nh_2\right)}}{\left(\left(m+r_1\right)z+n+r_2\right)^s}.
\end{equation*}

The Fourier expansion of the $G\left( z,k,r,h\right) $ is given as follows (\textit{cf}. \cite[Corollary 1]{SimsekASCM2004}):
\begin{equation}
G\left( z,k,r,h\right) =2\mathcal{Z} \left( k,h\right) +\frac{2\left( -2\pi
	i\right) ^{k}}{\left( k-1\right) !}\sum_{a,n=1}^{\infty }a^{k-1}e^{2\pi
	i\left( n+r\right) az},  \label{FourierExpG}
\end{equation}%
where $k\in \mathbb{N} \setminus\left\{ 1\right\} $, $r$ and $h$ are rational
numbers, $z\in \mathbb{H}$, and the function $\mathcal{Z} \left( k,h\right)$ is
\begin{equation*}
\mathcal{Z} \left( k,h\right)=\sum_{n>-a} \left(n+a\right)^{-s},
\end{equation*}
where $a\in \mathbb{R}$ and $\operatorname{Re}\left(s\right)>1$ (\textit{cf}. \cite{Lewittes}).

\subsection{Apostol-type numbers and polynomials with their interpolation functions}
Here, we recall some properties of generating functions for the Apostol-type numbers and polynomials, and also their interpolation functions.
 
The Apostol-Bernoulli polynomials $\mathcal{B}_{n}\left(x;\lambda\right) $ are
defined by%
\begin{equation}
\frac{te^{xt}}{\lambda e^{t}-1}=\sum_{n=0}^{\infty }\mathcal{B}_{n}\left(
x;\lambda\right) \frac{t^{n}}{n!}  \label{GF-ApostBern-Poly}
\end{equation}%
where $\lambda \in \mathbb{C}$; $\left\vert t\right\vert <2\pi $ when $\lambda=1$; $%
\left\vert t\right\vert <\left\vert \log \lambda\right\vert $ when $\lambda\neq 1$ (%
\textit{cf}. \cite{Apostol1951}). For $\lambda=1$, these polynomials are reduced to the generating function for Bernoulli polynomials $B_{n}\left(x\right)$ as follows:
\begin{equation}
\frac{te^{xt}}{e^{t}-1}=\sum_{n=0}^{\infty }B_{n}\left(x\right)\frac{t^{n}}{n!},
\label{GF-Bern-Poly}
\end{equation}%
(\textit{cf}. \cite{Apostol1951}, \cite{Boyadzhiev}, \cite{ChoiEtal2005}, \cite{ChoiEtal2008}, \cite{KucukogluMJOM}, \cite{Ozdenetal2010}, \cite{SrivastavaChoi}, \cite{SrivastavaChoi2012}, \cite{KucukogluJNT}).

Upon setting $x=0$ in (\ref{GF-ApostBern-Poly}), Apostol-Bernoulli polynomials are reduced to the Apostol-Bernoulli
numbers $\mathcal{B}_{n}\left(\lambda\right) $, given by%
\begin{equation}
\frac{t}{\lambda e^{t}-1}=\sum_{n=0}^{\infty }\mathcal{B}_{n}\left(\lambda\right) \frac{%
	t^{n}}{n!}  \label{GF-ApostBern-Num}
\end{equation}%
which, for $\lambda=1$, is reduced to the generating function for Bernoulli numbers $B_{n}$ as follows:
\begin{equation}
\frac{t}{e^{t}-1}=\sum_{n=0}^{\infty }B_{n}\frac{t^{n}}{n!},
\label{GF-Bern-Num}
\end{equation}%
which is related to Bernoulli polynomials by the following relation:
\begin{equation}
{B}_{m}\left( x\right) =\sum\limits_{j=0}^{m}
\binom{m}{j} x^{m-j}{B}_{j}
\label{Relation-BernoulliNumPoly}
\end{equation}%
and for $m>1$, the following holds true:
\begin{equation}
{B}_{m}\left( 1\right)={B}_{m}.
\label{R-BernNum-BernPoly}
\end{equation}
(\textit{cf}. \cite{Apostol1951}, \cite{Boyadzhiev}, \cite{ChoiEtal2005}, \cite{ChoiEtal2008}, \cite{KucukogluMJOM}, \cite{Ozdenetal2010}, \cite{SrivastavaChoi}, \cite{SrivastavaChoi2012}, \cite{KucukogluJNT})

Note that first few values of the Apostol-Bernoulli numbers are given as follows:
\begin{eqnarray*}
	\mathcal{B}_{0}\left(\lambda\right) &=&0, 	\mathcal{B}_{1}\left( \lambda\right) =\frac{1}{\lambda-1},
	\mathcal{B}_{2}\left(\lambda\right) =\frac{-2\lambda}{\left( \lambda-1\right) ^{2}}, \\
	\mathcal{B}_{3}\left( \lambda\right) &=&\frac{3\lambda\left( \lambda+1\right) }{\left(
		\lambda-1\right) ^{3}},	\mathcal{B}_{4}\left(\lambda\right) =\frac{-4\lambda\left( \lambda^{2}+4\lambda+1\right) }{\left(
		\lambda-1\right) ^{4}}, \ldots
\end{eqnarray*}%
and also first few values of the Bernoulli numbers are given as follows:
\begin{eqnarray*}
	B_{0}&=&1, B_{1}=-\frac{1}{2}, B_{2}=\frac{1}{6}, B_{3}=0, B_{4}=-\frac{1}{30}, B_{5}=0, B_{6}=\frac{1}{42},  B_{7}=0,\\
	&& B_{8}=-\frac{1}{30}, B_{9}=0, B_{10}=\frac{5}{66}, \ldots
\end{eqnarray*}%
with $B_{2n+1}=0, \left(n \in \mathbb{N}\right)$ (\textit{cf}. \cite{Apostol1951}, \cite{Boyadzhiev}, \cite{ChoiEtal2005}, \cite{ChoiEtal2008}, \cite{KucukogluMJOM}, \cite{Ozdenetal2010}, \cite{SrivastavaChoi}, \cite{SrivastavaChoi2012}, \cite{KucukogluJNT}).

For $m>1$, the relation between the Apostol-Bernoulli numbers and polynomials is
given by (\textit{cf}. \cite[p. 165, Eq-(3.5)]{Apostol1951}, \cite{Boyadzhiev}):
\begin{equation}
\lambda\mathcal{B}_{m}\left( 1;\lambda\right) =\mathcal{B}_{m}\left(\lambda\right).
\label{R-ApostBern-Bern}
\end{equation}

The Lerch transcendent function (or the Hurwitz-Lerch zeta function) $\Phi (\lambda,s,a)$ is given by (\textit{cf}. \cite[p.
121 et seq.]{SrivastavaChoi}):
\begin{eqnarray*}
	\Phi (\lambda,s,a) =\sum_{n=0}^{\infty }\frac{\lambda^{n}}{(n+a)^{s}},
\end{eqnarray*}%
which converges for the case of $a\in \mathbb{C\diagdown Z}_{0}^{-}$, $s\in \mathbb{C}$
when $\left\vert \lambda\right\vert <1$ and the case of $\operatorname{Re}\left(s\right)>1$ when $\left\vert \lambda\right\vert
=1$. For $m\in \mathbb{N}_{0}$, the function $\Phi (\lambda,s,a)$ is interpolation function of
Apostol-Bernoulli numbers, that is%
\begin{equation}
\Phi (\lambda,-m,0)=-\frac{\mathcal{B}_{m+1}\left(\lambda\right) }{m+1}.
\label{Int-Apost-Bern}
\end{equation}%
In the special case of $a=1$ and $\lambda=1$, the function $\Phi (\lambda,s,a)$ reduces to
the Riemann zeta function given by
\begin{equation*}
\Phi (1,s,1)=\zeta (s)=\sum_{n=1}^{\infty }\frac{1}{n^{s}}\text{, }\operatorname{Re}\left(s\right)>1
\end{equation*}%
which, for $s\rightarrow-m$ ($m\in \mathbb{N}_{0}$), is interpolation function of Bernoulli numbers, that is%
\begin{equation}
\zeta (-m)=-\frac{B_{m+1}}{m+1}
\label{Int-Bern}
\end{equation}%
(\textit{cf}. \cite{Apostol1951}, \cite{Boyadzhiev}, \cite{ChoiEtal2005}, \cite{ChoiEtal2008}, \cite{KucukogluMJOM}, \cite{Ozdenetal2010}, \cite{SrivastavaChoi}, \cite{SrivastavaChoi2012}, \cite{KucukogluJNT}).

It is well known that the polylogarithm function $Li_{s}(z)$ is given in terms of the Lerch transcendent function as follows (\textit{cf}. \cite{Guillera and Sondow}):
\begin{equation}
\lambda\Phi (\lambda,s,1)=Li_{s}(\lambda)=\sum_{n=1}^{\infty }\frac{\lambda^{n}}{n^{s}}
\label{Polylog-F}
\end{equation}
which, for $s\rightarrow-m$ ($m\in \mathbb{N}_{0}$), is interpolation function of the Eulerian numbers $%
A\left( m,k\right) $ and the Apostol-Bernoulli numbers. That is
\begin{equation}
Li_{-m}(\lambda)=\frac{1}{\left( 1-\lambda\right) ^{m+1}}\sum_{j=0}^{m}A\left(
m,j\right) \lambda^{m-j},
\label{Int-PolyLog}
\end{equation}%
and
\begin{equation}
Li_{-m}(\lambda)=-\frac{\mathcal{B}_{m+1}\left(\lambda\right) }{m+1}.
\label{Int-PolyLog2}
\end{equation}
(\textit{cf}. \cite{Comtet}, \cite{SrivastavaChoi2012}).

\section{Dirichlet type series involving new number-theoretic functions}

In this section, we define Dirichlet type series associated with new type number-theoretic functions related to the M\"{o}bius function and the Euler Totient phi functions. Relations between these series, the Riemann zeta functions and polylogarithm function are given. It is well-known that the Riemann zeta function and the polylogarithm function interpolates the Bernoulli numbers and the Apostol-Bernoulli numbers at negative integers, respectively.
In the light of this information, the values of the newly defined Dirichlet series at the negative integers have been examined. Moreover, some identities and relations involving Bernoulli numbers and the Apostol-Bernoulli numbers are given.

\subsection{New number-theoretic function including the numbers $L_{k}\left(n\right)$ and related Dirichlet series}

Here, by using (\ref{LynNum}), we set the following presumably new number-theoretic function:
	\begin{equation}
	L_{k}\left(x: n\right) =\frac{1}{n}\sum_{d|n}\mu \left( \frac{n}{d}\right)
	k^{d}x^d
	\label{Unification-LynNum}
	\end{equation}

\begin{remark}
	For $x=1$, \textup{(\ref{Unification-LynNum})} are reduced to \textup{(\ref{LynNum})}. That is
	\[L_{k}\left(n\right)=L_{k}\left(1: n\right).\]
\end{remark}

By selecting $n=6$, which is the smallest number that can be written as the multiplication of two distinct prime numbers, in (\ref{Unification-LynNum}), few values of the function $L_{k}\left(x: n\right)$ are given as follows:
\begin{eqnarray*}
L_{1}\left(x: 6\right)&=&\frac{x^6-x^3-x^2+x}{6},\\	
L_{2}\left(x: 6\right)&=&\frac{32x^6-4x^3-2x^2+x}{3}.
\end{eqnarray*}

By (\ref{Unification-LynNum}), we define the function $\zeta _{1}\left(x:k,s\right)$ Dirichlet series associated with the function $L_{k}\left(x: n\right)$ by the following definition:
\begin{definition}
	Let $k\in \mathbb{N}$ and  $x \in \mathbb{R}$. We define
	\begin{equation*}
	\zeta _{1}\left(x: k, s\right) =\sum_{n=1}^{\infty }\frac{nL_{k}\left(x: n\right)
	}{n^{s}}.
	\end{equation*}
\end{definition}

\begin{theorem}
	\label{Theorem-1}
	Let $k\in \mathbb{N}$ and $x \in \mathbb{R}$ such that $|kx|<1$. Then we have
	\begin{equation}
	\zeta (s)\zeta _{1}\left(x:k,s\right) =Li_{s}(kx)
	\label{Func-Zeta1}
	\end{equation}
	where $\operatorname{Re}\left(s\right)>1$.
\end{theorem}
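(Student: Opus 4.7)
The plan is to recognize the numerator $nL_k(x:n)$ as a Dirichlet convolution and then apply the product formula (\ref{DrichletProduct}) directly. From the definition (\ref{Unification-LynNum}) we have
\[
nL_k(x:n)=\sum_{d\mid n}\mu\!\left(\tfrac{n}{d}\right)(kx)^{d},
\]
so if we introduce the number-theoretic function $f(n)=(kx)^{n}$, then $nL_k(x:n)=(\mu\ast f)(n)$. This is the structural observation on which everything else rests.

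Next I would identify the two Dirichlet series involved. The series associated with $f$ is $\sum_{n=1}^{\infty}(kx)^n/n^{s}=Li_{s}(kx)$, which is exactly the definition (\ref{Polylog-F}) of the polylogarithm with argument $\lambda=kx$. The series associated with $\mu$ is the standard identity $\sum_{n=1}^{\infty}\mu(n)/n^{s}=1/\zeta(s)$, valid for $\operatorname{Re}(s)>1$.

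Then I would check that we are inside the common half-plane of absolute convergence so that (\ref{DrichletProduct}) applies. The hypothesis $|kx|<1$ ensures that $\sum|(kx)^n|/n^{\operatorname{Re}(s)}$ converges for every $\operatorname{Re}(s)>0$, while the Möbius series converges absolutely for $\operatorname{Re}(s)>1$. Hence for $\operatorname{Re}(s)>1$ both factors lie in their absolute-convergence half-planes and (\ref{DrichletProduct}) yields
\[
\frac{1}{\zeta(s)}\,Li_{s}(kx)=\sum_{n=1}^{\infty}\frac{(\mu\ast f)(n)}{n^{s}}=\sum_{n=1}^{\infty}\frac{nL_{k}(x:n)}{n^{s}}=\zeta_{1}(x:k,s).
\]
Multiplying through by $\zeta(s)$ gives (\ref{Func-Zeta1}).

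There is no real obstacle here; the entire content is the bookkeeping step of recognizing $n L_k(x:n)$ as a Möbius convolution of the geometric-type function $(kx)^n$. The only thing that needs a brief word of care is the convergence check, and this is handled cleanly by the hypothesis $|kx|<1$ combined with $\operatorname{Re}(s)>1$.
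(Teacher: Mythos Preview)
Your proof is correct and follows essentially the same route as the paper: both recognize $nL_k(x:n)=(\mu\ast f)(n)$ with $f(n)=(kx)^n$, identify the factor Dirichlet series as $1/\zeta(s)$ and $Li_s(kx)$, and invoke the multiplication formula (\ref{DrichletProduct}). Your version is slightly more explicit about the convolution step and the convergence check, but the argument is the same.
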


\begin{proof}
	Let%
	\begin{equation*}
	F_{\mu }\left( s\right) =\sum_{n=1}^{\infty }\frac{\mu \left( n\right) }{%
		n^{s}}=\frac{1}{\zeta (s)},
	\end{equation*}%
	where $\operatorname{Re}\left(s\right)>1$ (\textit{cf}. \cite[p. 228, Example 1]{Apostol1998}) and
	\begin{equation*}
	G\left(x: k, s\right) =\sum_{n=1}^{\infty }\frac{\left(kx\right)^{n}}{n^{s}}=Li_{s}(kx),
	\end{equation*}
	where $|kx|<1$ and $\operatorname{Re}\left(s\right)>1 $.
	Replacing $F\left(s\right)$ by $F_{\mu }\left( s\right)$
	and $G\left(s\right)$ by $G\left(x: k, s\right)$ in (\ref{DrichletProduct}) yields the assertion of Theorem \ref{Theorem-1}.
\end{proof}

By substituting $s=-m$, with $m\in \mathbb{N}_0$, into (\ref{Func-Zeta1}) and by using (\ref{Int-Apost-Bern}) and (\ref{Int-Bern}), we  get the following theorem:
\begin{theorem}
	\label{Corollary-1}
	Let $k \in \mathbb{N}$. By beginning with
	 \begin{equation*}
	 \zeta _{1}\left(x: k, 0\right)=\frac{2 }{1-kx},
	 \end{equation*}
we have
\begin{equation}
\zeta _{1}\left(x: k, -m\right)=\frac{\mathcal{B}_{m+1}\left(kx\right) }{{B}_{m+1}}.
\label{Zeta1-Fun-Bernoulli}
\end{equation}
\end{theorem}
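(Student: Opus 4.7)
The plan is to evaluate the functional identity from Theorem \ref{Theorem-1}, namely $\zeta(s)\,\zeta_1(x:k,s) = Li_s(kx)$, at the negative integers $s=-m$, $m\in\mathbb{N}_0$. The identity (\ref{Func-Zeta1}) was derived only for $\operatorname{Re}(s)>1$, so the first task is to appeal to the meromorphic continuations of the Riemann zeta function and of the polylogarithm (equivalently, of the Hurwitz--Lerch function $\Phi$), which in turn bestow an analytic continuation on the ratio $Li_s(kx)/\zeta(s)$ defining $\zeta_1(x:k,s)$ off its original half-plane. Once the continuation is in hand, the remainder of the argument is a single algebraic manipulation.

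For the actual calculation, I would rewrite (\ref{Func-Zeta1}) as $\zeta_1(x:k,s) = Li_s(kx)/\zeta(s)$, set $s=-m$, and plug in the two interpolation formulas recorded earlier: by (\ref{Int-PolyLog2}) (or equivalently (\ref{Int-Apost-Bern}) applied to $\Phi(kx,-m,0)$) the numerator becomes $-\mathcal{B}_{m+1}(kx)/(m+1)$, while by (\ref{Int-Bern}) the denominator equals $-B_{m+1}/(m+1)$. The common factor $-(m+1)^{-1}$ cancels, producing (\ref{Zeta1-Fun-Bernoulli}). As a sanity check on the stated initial value at $m=0$, I would substitute the explicit values $\mathcal{B}_1(\lambda)=1/(\lambda-1)$ and $B_1=-1/2$ listed in the introduction; the ratio $\mathcal{B}_1(kx)/B_1$ then collapses to $2/(1-kx)$, matching the starting value in the theorem.

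The main obstacle, beyond the analytic continuation just mentioned, is the well-known fact that $B_{m+1}=0$ for every even $m\geq 2$, which renders the right-hand side of (\ref{Zeta1-Fun-Bernoulli}) a literal $0/0$ at those points. I would flag this: at such $m$ the identity must be read in the unreduced form $\zeta(-m)\,\zeta_1(x:k,-m) = -\mathcal{B}_{m+1}(kx)/(m+1)$, that is, as the statement that both sides of the cancellation vanish simultaneously, rather than as a bona fide quotient. Modulo this interpretive caveat, the theorem reduces instantly to Theorem \ref{Theorem-1} combined with the classical interpolation values (\ref{Int-Bern}) and (\ref{Int-PolyLog2}).
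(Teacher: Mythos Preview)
Your proposal is correct and coincides with the paper's primary derivation: the paper states immediately before the theorem that it follows by substituting $s=-m$ into (\ref{Func-Zeta1}) and invoking (\ref{Int-Apost-Bern}) and (\ref{Int-Bern}), exactly as you do. Your remarks on analytic continuation and on the $0/0$ indeterminacy when $B_{m+1}=0$ are more careful than what the paper writes explicitly, but the underlying argument is identical.

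It is worth noting that the paper also supplies a second, independent proof of the same theorem via generating functions: starting from the Lambert-series identity (\ref{Lambert-Lkny}), it substitutes $x=e^{z}$, expands both sides using the generating functions (\ref{GF-Bern-Num}) and (\ref{GF-ApostBern-Num}), and compares coefficients of $z^{m}/m!$ together with (\ref{R-BernNum-BernPoly}) and (\ref{R-ApostBern-Bern}). That route avoids any appeal to meromorphic continuation of $\zeta(s)$ or $Li_{s}$, trading it for a formal power-series computation; your approach, by contrast, is shorter and makes the dependence on Theorem \ref{Theorem-1} transparent. Either argument suffices.
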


Here, by using generating functions method for the Bernoulli numbers and Apostol-Bernoulli numbers, we also give another proof of Theorem \ref{Corollary-1} as follows:

Substituting $f\left(n\right)=L_k\left(y:n\right)$ into (\ref{LSeries1}), after some elementary algebraic computations, for $\left\vert kyx\right\vert <1$, one easily gets the following well-known Lambert series which is used in next section.

\begin{equation}
\sum_{n=1}^{\infty }nL_{k}\left(y: n\right) \frac{x^{n}}{1-x^{n}}=\frac{kyx}{%
	1-kyx}.  \label{Lambert-Lkny}
\end{equation}
By substituting $x=e^{z}$ into (\ref{Lambert-Lkny}), we have%
\begin{equation*}
\sum_{n=1}^{\infty }nL_{k}\left(y: n\right) \frac{e^{zn}}{1-e^{zn}}=\frac{%
	kye^{z}}{1-kye^{z}}
\end{equation*}%
By using (\ref{GF-ApostBern-Num}) and (\ref{GF-Bern-Num}) in the above equation, we have%
\begin{equation*}
\sum_{n=1}^{\infty }nL_{k}\left(y: n\right) \sum_{m=0}^{\infty }\left(\frac{n^{m}}{m+1}\sum_{j=0}^{m+1}\binom{m+1}{j}B_{j}\right)
\frac{z^{m}}{m!}=\frac{ky}{z}\sum_{m=0}^{\infty }\mathcal{B}_{m}\left(
1;ky\right) \frac{z^{m}}{m!}.
\end{equation*}%
Combining (\ref{Relation-BernoulliNumPoly}) with the above equation yields
\begin{equation*}
\sum_{m=0}^{\infty }\sum_{n=1}^{\infty } \frac{n^{m+1}L_{k}\left(y: n\right)B_{m+1}\left(1\right)}{m+1}
\frac{z^{m}}{m!}=ky\sum_{m=0}^{\infty }\frac{\mathcal{B}_{m+1}\left(
	1;ky\right)}{m+1} \frac{z^{m}}{m!}
\end{equation*}%
Comparing the coefficients of $\frac{z^{m}}{m!}$ on both sides of the above
equation and using (\ref{R-BernNum-BernPoly}) and (\ref{R-ApostBern-Bern}) yields the assertion of Theorem \ref{Corollary-1}.

By using (\ref{Int-Apost-Bern}), we also have the following theorem:
\begin{theorem}
	Let $k \in \mathbb{N}$. By beginning with
	\begin{equation*}
	\zeta _{1}\left(x: k, 0\right)=\frac{2 }{1-kx},
	\end{equation*}
	we have
	\begin{equation}
	\zeta _{1}\left(x: k, -m\right) =-\frac{m+1}{\left( 1-kx\right) ^{m+1}B_{m+1}}%
	\sum_{j=0}^{m}A\left( m,j\right) \left(kx\right)^{m-j}.
	\label{Zeta1-Fun-Eulerian}
	\end{equation}
\end{theorem}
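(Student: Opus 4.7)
The plan is to derive the identity as a direct corollary of Theorem \ref{Theorem-1} by specialising $s=-m$ and invoking the classical interpolation values of the Riemann zeta function and the polylogarithm function listed in the introduction. The starting point is the functional relation
\[
\zeta(s)\,\zeta_{1}(x:k,s) \;=\; Li_{s}(kx),
\]
established in Theorem \ref{Theorem-1}. Although this was proved on the half-plane $\operatorname{Re}(s)>1$, both $\zeta(s)$ and $Li_{s}(\lambda)$ (for fixed $|\lambda|<1$) admit meromorphic continuations to all of $\mathbb{C}$. Consequently $\zeta_{1}(x:k,s)=Li_{s}(kx)/\zeta(s)$ extends meromorphically, and is in particular well-defined at $s=-m$ for those $m\in\mathbb{N}_{0}$ with $B_{m+1}\neq 0$, which is exactly the case in which the right-hand side of the claimed formula makes sense.

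The next step is to substitute $s=-m$ and read off both sides. On the left, equation (\ref{Int-Bern}) supplies $\zeta(-m)=-B_{m+1}/(m+1)$. On the right, equation (\ref{Int-PolyLog}) gives
\[
Li_{-m}(kx) \;=\; \frac{1}{(1-kx)^{m+1}} \sum_{j=0}^{m} A(m,j)\,(kx)^{m-j}.
\]
Dividing the second expression by the first and rearranging produces exactly the claimed identity (\ref{Zeta1-Fun-Eulerian}). The base case $\zeta_{1}(x:k,0)=2/(1-kx)$ can either be taken from Theorem \ref{Corollary-1} using $\mathcal{B}_{1}(kx)=1/(kx-1)$ together with $B_{1}=-1/2$, or obtained as the $m=0$ instance of the same derivation (with the convention $A(0,0)=1$).

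An alternative route, which avoids invoking analytic continuation directly, is to start from Theorem \ref{Corollary-1} in the form $\zeta_{1}(x:k,-m) = \mathcal{B}_{m+1}(kx)/B_{m+1}$, then apply identity (\ref{Int-PolyLog2}) rewritten as $\mathcal{B}_{m+1}(\lambda) = -(m+1)\,Li_{-m}(\lambda)$, and finally substitute (\ref{Int-PolyLog}) for $Li_{-m}(kx)$. Either path reduces the entire argument to bookkeeping.

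The only genuine obstacle is the meromorphic-continuation step in the first approach: one must be sure that evaluating the identity from Theorem \ref{Theorem-1} at $s=-m$ is legitimate, which requires that both $\zeta(s)$ and $Li_{s}(kx)$ extend past $\operatorname{Re}(s)=1$ and that $\zeta(-m)\neq 0$, i.e.\ $B_{m+1}\neq 0$. Once this is acknowledged, the theorem follows by a single line of algebra, and the alternative route via Theorem \ref{Corollary-1} sidesteps the issue altogether by transferring the analytic burden to the already-proved interpolation identities for Apostol-Bernoulli numbers.
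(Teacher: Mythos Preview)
Your proposal is correct and follows essentially the same approach as the paper: substitute $s=-m$ into the identity $\zeta(s)\zeta_{1}(x:k,s)=Li_{s}(kx)$ from Theorem~\ref{Theorem-1} and invoke the interpolation values (\ref{Int-Bern}) and (\ref{Int-PolyLog}). The paper's own justification is a single sentence pointing to the interpolation formulas, and your write-up is a careful expansion of exactly that argument (with the added observation that the alternative route via Theorem~\ref{Corollary-1} and (\ref{Int-PolyLog2}) is equivalent).
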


\begin{remark}
	By comparing \textup{(\ref{Zeta1-Fun-Bernoulli})} and \textup{(\ref{Zeta1-Fun-Eulerian})}, we also derive preasumably known relation between the Eulerian numbers and Apostol-Bernoulli numbers as follows:
		\begin{equation*}
		\mathcal{B}_{m+1}\left(kx\right) =-\frac{m+1}{\left( 1-kx\right) ^{m+1}}%
		\sum_{j=0}^{m}A\left( m,j\right)\left(kx\right)^{m-j}.
		\end{equation*}
\end{remark}

Moreover, we construct another function, which interpolates Bernoulli numbers and Apostol-Bernoulli numbers, as follows:
\begin{equation*}
\zeta _{1, \text{odd}}\left(x: k, s\right) =\sum_{n=1}^{\infty }\frac{L_{k}\left(x:
	2n-1\right) }{\left( 2n-1\right) ^{s-1}}.
\end{equation*}

\begin{theorem}
	\label{Theorem-3}
		Let $k\in \mathbb{N}$ and $x \in \mathbb{R}$ such that $|kx|<1$. Then we have
	\begin{equation}
	\zeta (s)\zeta _{1, \text{odd}}\left(x: k, s\right)=\frac{2^sLi_{s}\left(kx\right)-Li_{s}\left(k^2x^2\right)}{2^s-1}.
	\label{Func-Zeta1odd}	
	\end{equation}
\end{theorem}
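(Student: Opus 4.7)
The plan is to mimic the proof of Theorem \ref{Theorem-1}, but restricting every Dirichlet series to odd indices. I would introduce the odd parts $\mu_{\text{o}}$ and $g_{\text{o}}$ of $\mu(n)$ and $g(n)=(kx)^{n}$, defined to equal the original functions on odd $n$ and to vanish on even $n$. If $m$ is even, then in any factorization $m=de$ at least one of $d,e$ is even, so $(\mu_{\text{o}} \ast g_{\text{o}})(m)=0$; if $m$ is odd, then every divisor of $m$ is odd, so $(\mu_{\text{o}} \ast g_{\text{o}})(m) = (\mu \ast g)(m) = m L_{k}(x:m)$ by the definition (\ref{Unification-LynNum}). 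Applying the Dirichlet product formula (\ref{DrichletProduct}) to $\mu_{\text{o}}$ and $g_{\text{o}}$ therefore yields
\begin{equation*}
\left(\sum_{d \text{ odd}}\frac{\mu(d)}{d^{s}}\right)\left(\sum_{e \text{ odd}}\frac{(kx)^{e}}{e^{s}}\right) = \sum_{m \text{ odd}}\frac{m L_{k}(x:m)}{m^{s}} = \zeta_{1,\text{odd}}(x:k,s).
\end{equation*}

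Next I would evaluate each of the two odd-part series in closed form. For the polylogarithm factor, splitting $Li_{s}(kx)=\sum_{n\ge 1}(kx)^{n}/n^{s}$ into odd and even indices and substituting $n=2m$ in the even part yields $\sum_{n \text{ even}}(kx)^{n}/n^{s}=2^{-s}Li_{s}(k^{2}x^{2})$, hence
\begin{equation*}
\sum_{e \text{ odd}}\frac{(kx)^{e}}{e^{s}} = Li_{s}(kx)-2^{-s}Li_{s}(k^{2}x^{2}).
\end{equation*}
For the M\"{o}bius factor I would use $\mu(2m)=-\mu(m)$ when $m$ is odd and $\mu(2m)=0$ when $m$ is even. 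This makes the even-indexed subseries of $1/\zeta(s)=\sum_{n}\mu(n)/n^{s}$ equal to $-2^{-s}$ times its odd-indexed subseries, and solving the resulting linear equation gives
\begin{equation*}
\sum_{d \text{ odd}}\frac{\mu(d)}{d^{s}} = \frac{1}{(1-2^{-s})\zeta(s)} = \frac{2^{s}}{(2^{s}-1)\zeta(s)}.
\end{equation*}

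Substituting these two evaluations into the factored expression above and multiplying through by $\zeta(s)$ then produces the claimed identity (\ref{Func-Zeta1odd}). The argument is essentially a bookkeeping exercise in odd/even splittings, so there is no serious obstacle; the only technical point is to justify term-by-term multiplication of the two restricted Dirichlet series, which follows from absolute convergence on $\operatorname{Re}(s)>1$ under the hypothesis $|kx|<1$, exactly as in the proof of Theorem \ref{Theorem-1}.
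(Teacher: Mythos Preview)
Your proposal is correct and follows essentially the same route as the paper: factor $\zeta_{1,\text{odd}}(x:k,s)$ as the product of the odd-indexed M\"obius series and the odd-indexed polylogarithm series (this is the content of the paper's Lemma~\ref{Lemma1}), then evaluate each factor in closed form and combine. The only cosmetic difference is that the paper obtains $\sum_{d\text{ odd}}\mu(d)/d^{s}$ by invoking the cited identity (\ref{LemmaMobius2n}) with $m=2$, whereas you derive it directly from $\mu(2m)=-\mu(m)$ for odd $m$ and $\mu(2m)=0$ for even $m$; both arguments yield the same value $\dfrac{2^{s}}{(2^{s}-1)\zeta(s)}$.
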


Before giving proof of Theorem \ref{Theorem-3}, we need the following lemmas:

\begin{lemma}
	If $m\in \mathbb{N}$ and $p$'s are prime numbers, then the following holds true%
	\begin{equation}
	\sum_{n=1}^{\infty }\frac{\mu \left( mn\right) }{n^{s}}=\frac{\mu \left(
		m\right) }{\zeta (s)}\prod\limits_{p|m}\frac{1}{1-p^{-s}},
	\label{LemmaMobius2n}
	\end{equation}
	(\textit{cf}. \textup{\cite[Eq. (2.31)]{Mobius2n2016Laohakosol}, \cite[p. 30]{Navas}}).
\end{lemma}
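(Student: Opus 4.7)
The plan is to reduce the identity to a restricted Euler-product computation. I would first dispose of the degenerate case: if $m$ is not square-free then $\mu(m)=0$, and simultaneously $mn$ fails to be square-free for every $n\in\mathbb{N}$, so $\mu(mn)=0$ identically; both sides of the claimed identity vanish and the lemma holds trivially. I may therefore assume without loss of generality that $m$ is square-free.

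Next I would exploit the multiplicativity of $\mu$ to simplify $\mu(mn)$. For square-free $m$, I observe that $\mu(mn)=0$ whenever $m$ and $n$ share a prime factor (since then $p^{2}\mid mn$), whereas for $\gcd(m,n)=1$ the multiplicativity of $\mu$ gives $\mu(mn)=\mu(m)\mu(n)$. This lets me restrict the summation to those $n$ coprime to $m$ and pull $\mu(m)$ outside:
\begin{equation*}
\sum_{n=1}^{\infty}\frac{\mu(mn)}{n^{s}}=\mu(m)\sum_{\substack{n\geq 1\\ \gcd(n,m)=1}}\frac{\mu(n)}{n^{s}}.
\end{equation*}

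The final step is to evaluate this restricted Dirichlet series. I would invoke the classical Euler product $\sum_{n=1}^{\infty}\mu(n)/n^{s}=\prod_{p}(1-p^{-s})=1/\zeta(s)$, valid for $\operatorname{Re}(s)>1$. Absolute convergence allows the Euler product to be split, and restricting the sum to $n$ coprime to $m$ corresponds to deleting exactly the factors indexed by primes dividing $m$, yielding
\begin{equation*}
\sum_{\substack{n\geq 1\\ \gcd(n,m)=1}}\frac{\mu(n)}{n^{s}}=\prod_{p\nmid m}\left(1-p^{-s}\right)=\frac{1}{\zeta(s)}\prod_{p\mid m}\frac{1}{1-p^{-s}}.
\end{equation*}
Multiplying through by $\mu(m)$ then produces the stated identity. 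The only subtlety is the Euler-product manipulation in this last step, which relies on the absolute convergence of the series in the half-plane $\operatorname{Re}(s)>1$; everything else is formal bookkeeping based on the multiplicativity of $\mu$ and unique factorization, so I would not expect any real obstacle beyond handling the square-free reduction cleanly at the start.
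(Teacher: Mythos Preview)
Your argument is correct and is the standard proof of this identity. Note, however, that the paper does not supply its own proof of this lemma: it is stated with references to \cite[Eq.~(2.31)]{Mobius2n2016Laohakosol} and \cite[p.~30]{Navas} and used as a black box, so there is no in-paper argument to compare against. Your proposal thus fills in what the paper leaves to the literature, and does so cleanly via the square-free reduction, multiplicativity of $\mu$, and the Euler product for $1/\zeta(s)$.
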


\begin{lemma}
	Let $k\in \mathbb{N}$ and $x \in \mathbb{R}$ such that $|kx|<1$. Then we have
	\begin{equation}
	\zeta _{1, \text{odd}}\left(x: k, s\right)=\sum_{n\not\equiv 0 \left(2\right)}\frac{\mu \left( n\right) }{%
		n^{s}}\sum_{m\not\equiv 0\left(2\right)}\frac{{\left(kx\right)}^{m}}{m^{s}}.
	\label{Zeta1-DirichletMulty}
	\end{equation}
	\label{Lemma1}
\end{lemma}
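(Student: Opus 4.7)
The plan is to prove Lemma 1 by straightforward manipulation: expand the numerator $L_k(x:2n-1)$ using its definition, rewrite the resulting double sum as a sum over pairs of positive integers whose product is odd, and then factor this as a product of two Dirichlet-type series over odd indices.

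More concretely, I would first use definition \textup{(\ref{Unification-LynNum})} with $n$ replaced by $2n-1$, which gives
\begin{equation*}
\left(2n-1\right)L_{k}\left(x:2n-1\right)=\sum_{d\mid (2n-1)}\mu\!\left(\frac{2n-1}{d}\right)(kx)^{d}.
\end{equation*}
Substituting this into the definition of $\zeta_{1,\text{odd}}(x:k,s)$, the factor $2n-1$ produced on top combines with $(2n-1)^{s-1}$ in the denominator to yield $(2n-1)^{-s}$, so that
\begin{equation*}
\zeta_{1,\text{odd}}(x:k,s)=\sum_{\substack{N\geq 1\\ N\text{ odd}}}\frac{1}{N^{s}}\sum_{d\mid N}\mu\!\left(\frac{N}{d}\right)(kx)^{d}.
\end{equation*}

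Next I would introduce new summation variables $a=N/d$ and $b=d$, so that the condition ``$d\mid N$ with $N$ odd'' becomes ``$ab=N$ with $ab$ odd.'' Since $ab$ is odd if and only if both $a$ and $b$ are odd, the double sum decouples completely:
\begin{equation*}
\zeta_{1,\text{odd}}(x:k,s)=\sum_{\substack{a,b\geq 1\\ a,b\text{ odd}}}\frac{\mu(a)\,(kx)^{b}}{a^{s}b^{s}}=\left(\sum_{a\not\equiv 0(2)}\frac{\mu(a)}{a^{s}}\right)\!\left(\sum_{b\not\equiv 0(2)}\frac{(kx)^{b}}{b^{s}}\right),
\end{equation*}
which is precisely \textup{(\ref{Zeta1-DirichletMulty})}.

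There is no real obstacle here; the only subtle point is the bijective reparametrization of the double sum and the verification that ``$N$ odd with $d\mid N$'' translates to ``both factors $a$ and $b$ odd,'' so that the two variables genuinely separate. Absolute convergence needed for interchanging and factoring the sums is inherited from $|kx|<1$ together with $\operatorname{Re}(s)>1$, exactly as in the Dirichlet product identity \textup{(\ref{DrichletProduct})} used earlier.
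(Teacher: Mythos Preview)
Your argument is correct and is essentially the same as the paper's: both unwind the Dirichlet convolution defining $(2n-1)L_k(x:2n-1)$ and use that a product of two positive integers is odd iff both factors are odd. The only cosmetic difference is direction --- the paper starts from the product of the two odd-index series and collects terms with $nm=a$ to recover $\zeta_{1,\text{odd}}$, whereas you start from $\zeta_{1,\text{odd}}$ and factor outward.
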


\begin{proof}
	In order to multiply the following series
	\begin{equation}
	\sum_{n\not\equiv 0 \left(2\right)}\frac{\mu \left( n\right) }{%
		n^{z}}
	\label{Partial-Mobius}
	\end{equation}
	and
	\begin{equation}
	\sum_{m\not\equiv 0\left(2\right)}\frac{{\left(kx\right)}^{m}}{m^{s}},
	\label{Partial-PolyLog}
	\end{equation}
	 we assume that $\operatorname{Re}\left(z\right)>b_1$; and also $|kx|<1$ and $\operatorname{Re}\left(s\right)>b_2$, respectively. By the light of proof of Theorem 11.5 in \cite{Apostol1998}, for the half-plane in which both series (\ref{Partial-Mobius}) and (\ref{Partial-PolyLog}) are absolute convergence, we have
	\begin{equation*}
	\sum_{n\not\equiv 0 \left(2\right)}\frac{\mu \left( n\right) }{%
		n^{s}}\sum_{m\not\equiv 0\left(2\right)}\frac{{\left(kx\right)}^{m}}{m^{s}}=\sum_{n\not\equiv 0 \left(2\right)}\sum_{m\not\equiv 0 \left(2\right)}\mu \left( n\right)\left(kx\right)^m\left(nm\right)^{-s}.
	\end{equation*}
	Setting $nm=a$; $a=1,3,5,\dots$ in the above equation and using Dirichlet convolution formula, we deduce that
	\begin{eqnarray*}
		\sum_{n\not\equiv 0 \left(2\right)}\frac{\mu \left( n\right) }{%
			n^{s}}\sum_{m\not\equiv 0\left(2\right)}\frac{{\left(kx\right)}^{m}}{m^{s}}&=&\sum_{a\not\equiv 0 \left(2\right)}\sum_{mn=a}\mu \left( n\right)\left(kx\right)^m a^{-s}\\
		&=&\sum_{a\not\equiv 0 \left(2\right)}\frac{\sum_{d|a}\mu \left(\frac{a}{d}\right)\left(kx\right)^d}{a^s},
	\end{eqnarray*}
	which yields the assertion of Lemma \ref{Lemma1}.
\end{proof}

\begin{proof}[Proof of Theorem \ref{Theorem-3}]
Substituting $m=2$ into (\ref{LemmaMobius2n}), we have%
\begin{equation*}
\sum_{n=1}^{\infty }\frac{\mu \left( 2n\right) }{n^{s}}=-\frac{1}{\zeta (s)}%
\left( \frac{1}{1-2^{-s}}\right) .
\end{equation*}
Thus, we have
\begin{equation*}
F_{\mu, \text{odd} }\left( s\right) =\sum_{n\not\equiv 0 \left(2\right)}\frac{\mu \left( n\right) }{%
	n^{s}}=\frac{1}{\zeta (s)}\left(\frac{2^s}{2^s-1}\right).
\end{equation*}%
Moreover, we also have
\begin{eqnarray*}
	G_{\text{odd} }\left(kx, s\right)& =&\sum_{n\not\equiv 0\left(2\right)}\frac{{\left(kx\right)}^{n}}{n^{s}}\\
	&=&Li_{s}\left(kx\right)-\sum_{n\equiv 0\left(2\right)}\frac{{\left(kx\right)}^{n}}{n^{s}}\\
	&=&Li_{s}\left(kx\right)-\frac{1}{2^s}Li_{s}\left(k^2x^2\right).
\end{eqnarray*}
Replacing $F\left(s\right)$ by $F_{\mu, \text{odd} }\left( s\right)$
and $G\left(s\right)$ by $G_{\text{odd} }\left(kx, s\right)$ in (\ref{DrichletProduct}) and combining the final equation with (\ref{Zeta1-DirichletMulty}), we get the assertion of Theorem \ref{Theorem-3}.
\end{proof}

By substituting $s=-m$, with $m\in \mathbb{N}$, into (\ref{Func-Zeta1odd}) and by using (\ref{Int-Apost-Bern}) and (\ref{Int-Bern}), we  get the following theorem:
\begin{theorem}
	Let $k\in \mathbb{N}$. Then we have
	\begin{equation}
	\zeta _{1, \text{odd}}\left(x: k, -m\right)=\frac{\mathcal{B}_{m+1}\left(kx\right) -2^m\mathcal{B}_{m+1}\left(k^2x^2\right)}{\left(1-2^m\right){B}_{m+1}}.
	\end{equation}
\end{theorem}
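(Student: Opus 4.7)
The plan is to specialize the functional identity from Theorem~\ref{Theorem-3} at $s=-m$ and invoke the two interpolation formulas (\ref{Int-Bern}) and (\ref{Int-PolyLog2}). Concretely, I would rewrite the identity in the form
\[
\zeta_{1,\text{odd}}(x:k,s)=\frac{1}{\zeta(s)}\cdot\frac{2^{s}Li_{s}(kx)-Li_{s}(k^{2}x^{2})}{2^{s}-1}
\]
and then extend it by meromorphic continuation to $s=-m$ with $m\in\mathbb{N}$. Since $2^{s}-1$ does not vanish at negative integers, no spurious pole is introduced on the right, and $\zeta(-m)$ is finite.

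Next, I would substitute $s=-m$ directly. By (\ref{Int-Bern}) one has $\zeta(-m)=-B_{m+1}/(m+1)$, and by (\ref{Int-PolyLog2}) one has $Li_{-m}(\lambda)=-\mathcal{B}_{m+1}(\lambda)/(m+1)$ for $\lambda\in\{kx,\,k^{2}x^{2}\}$. Multiplying the numerator and denominator of the right-hand side by $2^{m}$ rewrites it as
\[
\frac{Li_{-m}(kx)-2^{m}Li_{-m}(k^{2}x^{2})}{1-2^{m}},
\]
after which inserting the interpolation values lets the common factor $-1/(m+1)$ cancel against $\zeta(-m)$, leaving
\[
\zeta_{1,\text{odd}}(x:k,-m)=\frac{\mathcal{B}_{m+1}(kx)-2^{m}\mathcal{B}_{m+1}(k^{2}x^{2})}{(1-2^{m})\,B_{m+1}},
\]
which is the asserted identity.

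The main obstacle I anticipate is justifying the transport of an identity proved under $\operatorname{Re}(s)>1$ down to the negative integers: this relies on $\zeta(s)$, $Li_{s}$, and (through the identity of Theorem~\ref{Theorem-3}) $\zeta_{1,\text{odd}}(x:k,s)$ all admitting meromorphic continuations, so that the identity persists at $s=-m$ by the principle of analytic continuation. A secondary caveat is that the formula is informative only when $B_{m+1}\neq 0$, i.e., when $m=1$ or $m$ is odd with $m\geq 3$; for even $m\geq 2$ one has $B_{m+1}=0$, and in that regime one must check that the numerator on the right vanishes as well, so the statement is best read either as a limit identity or as implicitly restricted to those $m$ with $B_{m+1}\neq 0$.
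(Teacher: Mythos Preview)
Your proposal is correct and follows essentially the same route as the paper: the paper simply states that one substitutes $s=-m$ into the identity of Theorem~\ref{Theorem-3} and applies the interpolation formulas (\ref{Int-Apost-Bern}) and (\ref{Int-Bern}), which is exactly what you do (with (\ref{Int-PolyLog2}) in place of (\ref{Int-Apost-Bern}), an equivalent substitution). Your additional remarks on analytic continuation and on the vanishing of $B_{m+1}$ for even $m\geq 2$ are valid observations that the paper leaves implicit.
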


Since the function $\zeta _{1}\left(x:k,s\right)$ converges absolutely for $\operatorname{Re}\left(s\right)>1$ and $|kx|<1$, we also set
\begin{eqnarray}
\zeta _{1, \text{even}}\left(x: k, s\right)&=&\sum_{n=1}^{\infty }\frac{L_{k}\left(x:2n\right) }{\left( 2n\right) ^{s-1}} \notag\\
&=&\zeta _{1}\left(x: k, s\right)-\zeta _{1, \text{odd}}\left(x: k, s\right).
\label{Func-Zeta1Even}
\end{eqnarray}

By combining (\ref{Func-Zeta1}) and (\ref{Func-Zeta1odd}) with (\ref{Func-Zeta1Even}), we have the following theorem:
\begin{theorem}
	Let $k\in \mathbb{N}$ and $x \in \mathbb{R}$ such that $|kx|<1$. Then we have
	\begin{equation}
	\zeta (s)\zeta _{1, \text{even}}\left(x: k, s\right)=\frac{Li_{s}\left(k^2x^2\right)-Li_{s}\left(kx\right)}{2^s-1}.
	\label{Func-Zeta1Even-Th}	
	\end{equation}
\end{theorem}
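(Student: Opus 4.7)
The plan is to avoid any new Dirichlet convolution computation and instead exploit the defining identity (\ref{Func-Zeta1Even}), namely $\zeta_{1,\text{even}}(x:k,s)=\zeta_1(x:k,s)-\zeta_{1,\text{odd}}(x:k,s)$, together with the two formulas already established in Theorem \ref{Theorem-1} and Theorem \ref{Theorem-3}. Once this decomposition is multiplied through by $\zeta(s)$, the claim reduces to a one-line algebraic rearrangement, so no fresh analytic input is needed.

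Concretely, on the half-plane $\operatorname{Re}(s)>1$ with $|kx|<1$, where the Dirichlet series defining $\zeta(s)$, $\zeta_1(x:k,s)$ and $\zeta_{1,\text{odd}}(x:k,s)$ all converge absolutely, the definition (\ref{Func-Zeta1Even}) and the linearity of multiplication by $\zeta(s)$ yield
\[
\zeta(s)\zeta_{1,\text{even}}(x:k,s)=\zeta(s)\zeta_1(x:k,s)-\zeta(s)\zeta_{1,\text{odd}}(x:k,s).
\]
First I would substitute (\ref{Func-Zeta1}) for the first term on the right and (\ref{Func-Zeta1odd}) for the second, then combine over the common denominator $2^s-1$. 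This gives
\[
\zeta(s)\zeta_{1,\text{even}}(x:k,s)=\frac{(2^s-1)Li_s(kx)-\left(2^sLi_s(kx)-Li_s(k^2x^2)\right)}{2^s-1},
\]
and cancelling $2^sLi_s(kx)$ in the numerator immediately produces the right-hand side of (\ref{Func-Zeta1Even-Th}).

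Since the argument is a short linear combination of two results already in hand, there is no serious obstacle. The only subtlety worth flagging is the implicit restriction $2^s\neq 1$, which is inherited from the denominator $2^s-1$ appearing in Theorem \ref{Theorem-3}; the resulting identity then extends to the appropriate meromorphic domain in $s$ by the usual analytic continuation argument, exactly as was used for $\zeta_1(x:k,s)$ and $\zeta_{1,\text{odd}}(x:k,s)$ in the preceding theorems.
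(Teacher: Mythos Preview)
Your proposal is correct and follows essentially the same approach as the paper: the paper also derives (\ref{Func-Zeta1Even-Th}) by combining (\ref{Func-Zeta1}) and (\ref{Func-Zeta1odd}) with the decomposition (\ref{Func-Zeta1Even}), and your write-up simply makes the one-line algebra explicit.
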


By substituting $s=-m$, with $m\in \mathbb{N}$, into (\ref{Func-Zeta1odd}) and by using (\ref{Int-Apost-Bern}) and (\ref{Int-Bern}), we  get the following theorem:
\begin{theorem}
	Let $k \in \mathbb{N}$. Then we have
	\begin{equation}
	\zeta _{1, \text{even}}\left(x: k, -m\right)=\frac{2^m\left(\mathcal{B}_{m+1}\left(k^2x^2\right)-\mathcal{B}_{m+1}\left(kx\right)\right) }{\left(1-2^m\right){B}_{m+1}}.
	\end{equation}
\end{theorem}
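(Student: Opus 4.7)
The plan is to derive this evaluation directly from the functional identity
$$\zeta(s)\,\zeta_{1, \text{even}}(x:k,s)=\frac{Li_s(k^2x^2)-Li_s(kx)}{2^s-1}$$
established in the preceding theorem, by specializing $s=-m$ with $m\in\mathbb{N}$ and invoking the interpolation formulas for the Riemann zeta function and the polylogarithm that are recalled in the introduction.

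First, I would substitute $s=-m$ into both sides of the functional identity above. On the left, equation (\ref{Int-Bern}) gives $\zeta(-m)=-B_{m+1}/(m+1)$. On the right, the two polylogarithm values reduce via (\ref{Int-PolyLog2}) to $Li_{-m}(kx)=-\mathcal{B}_{m+1}(kx)/(m+1)$ and $Li_{-m}(k^2x^2)=-\mathcal{B}_{m+1}(k^2x^2)/(m+1)$. The denominator becomes $2^{-m}-1$, which I would rewrite as $(1-2^m)/2^m$ so that a clean factor of $2^m$ appears in the numerator of the simplified right-hand side.

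Collecting these pieces gives
$$-\frac{B_{m+1}}{m+1}\,\zeta_{1, \text{even}}(x:k,-m)=\frac{2^m\bigl(\mathcal{B}_{m+1}(kx)-\mathcal{B}_{m+1}(k^2x^2)\bigr)}{(m+1)(1-2^m)},$$
and solving for $\zeta_{1,\text{even}}(x:k,-m)$ (absorbing the overall minus sign by swapping the two Apostol-Bernoulli terms in the numerator) yields the claimed formula.

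The step which demands a word of care rather than real difficulty is the analytic justification of the substitution $s=-m$: the Dirichlet series defining $\zeta_{1,\text{even}}(x:k,s)$ and the polylogarithms converge only on their respective half-planes, so the identity at $s=-m$ is to be read in the sense of analytic (meromorphic) continuation, exactly as was done in Theorem \ref{Corollary-1}. The hypothesis $m\in\mathbb{N}$ ensures $1-2^m\neq 0$, so the right-hand side has no accidental pole; and although $B_{m+1}=0$ for even $m\geq 2$, this is a limitation of the present formulation rather than an obstruction to the derivation itself, and may be flagged in a remark.
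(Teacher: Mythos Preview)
Your proposal is correct and follows essentially the same approach as the paper: substitute $s=-m$ into the functional identity $\zeta(s)\,\zeta_{1,\text{even}}(x:k,s)=\dfrac{Li_s(k^2x^2)-Li_s(kx)}{2^s-1}$ and apply the interpolation formulas (\ref{Int-Bern}) and (\ref{Int-PolyLog2}) (equivalently (\ref{Int-Apost-Bern})). Your additional remarks on analytic continuation and on the vanishing of $B_{m+1}$ for even $m\geq 2$ go beyond what the paper records but are appropriate observations.
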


\subsection{New number-theoretic function including the numbers $N_{k}\left(n\right)$ and related Dirichlet series}
Here, by using (\ref{NeckNum}), we set the following presumably new number-theoretic function related to the necklace polynomials:
	\begin{equation}
	N_{k}\left(x: n\right) =\frac{1}{n}\sum_{d|n}\phi \left( \frac{n}{d}\right)
	k^{d}x^d.
	\label{Unification-NeckNum}
	\end{equation}

\begin{remark}
	For $x=1$, \textup{(\ref{Unification-NeckNum})} are reduced to \textup{(\ref{NeckNum})}. That is
	\[N_{k}\left(n\right)=N_{k}\left(1: n\right).\]
\end{remark}

With the help of (\ref{Unification-NeckNum}), few values of the function $N_{k}\left(x: n\right)$ are given as follows:
\begin{eqnarray*}
	N_{1}\left(x: 6\right)&=&\frac{x^6+x^3+2x^2+2x}{6},\\	
	N_{2}\left(x: 6\right)&=&\frac{32x^6+4x^3+4x^2+2x}{3}.
\end{eqnarray*}

By (\ref{NeckNum}), we also define another Dirichlet series associated with the function $N_{k}\left(x: n\right)$ by the following definition:
\begin{definition}
	Let $k\in \mathbb{N}$, $x \in \mathbb{R}$ and $s \in \mathbb{C}$. We define
	\begin{equation*}
	\zeta _{2}\left(x: s,k\right) =\sum_{n=1}^{\infty }\frac{nN_{k}\left(x: n\right)
	}{n^{s}}.
	\end{equation*}
\end{definition}

\begin{theorem}
	\label{Theorem-2}%
	Let $k\in \mathbb{N}$, $x \in \mathbb{R}$ and $s \in \mathbb{C}$ such that $\operatorname{Re}\left(s\right)>2$ and $|kx|<1$. Then we have
	\begin{equation}
	\zeta (s)\zeta _{2}\left(x: s,k\right) =\zeta (s-1)Li_{s}(kx).
	\label{Func-Zeta2}
	\end{equation}
\end{theorem}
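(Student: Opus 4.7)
The plan is to imitate the proof of Theorem \ref{Theorem-1}, replacing the Möbius function with the Euler totient $\phi$. The key observation is that multiplying both sides of (\ref{Unification-NeckNum}) by $n$ turns the definition of $N_k(x: n)$ into the Dirichlet convolution
\begin{equation*}
nN_{k}\left(x: n\right) = \sum_{d|n}\phi\!\left(\tfrac{n}{d}\right)(kx)^{d} = \bigl(\phi * g_{kx}\bigr)(n),
\end{equation*}
where $g_{kx}(n) = (kx)^{n}$. So $\zeta_{2}(x: s,k)$ is exactly the Dirichlet series whose coefficients are the convolution $\phi * g_{kx}$.

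The first step is to introduce the two Dirichlet series
\begin{equation*}
F_{\phi}(s) = \sum_{n=1}^{\infty}\frac{\phi(n)}{n^{s}}, \qquad G(x: k, s) = \sum_{n=1}^{\infty}\frac{(kx)^{n}}{n^{s}} = Li_{s}(kx),
\end{equation*}
and verify their domains of absolute convergence: $F_{\phi}(s)$ converges absolutely for $\operatorname{Re}(s) > 2$ (since $\phi(n) \le n$), and $G(x: k, s)$ converges absolutely for $|kx| < 1$, so both hypotheses of the theorem are exactly what is needed to apply (\ref{DrichletProduct}). Thus
\begin{equation*}
F_{\phi}(s)\, G(x: k, s) = \sum_{n=1}^{\infty}\frac{(\phi * g_{kx})(n)}{n^{s}} = \zeta_{2}(x: s, k).
\end{equation*}

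The second step is to invoke the classical identity $F_{\phi}(s) = \zeta(s-1)/\zeta(s)$, valid for $\operatorname{Re}(s) > 2$, which is the standard evaluation of the Dirichlet series of Euler's totient (this is the analogue of the identity $F_{\mu}(s) = 1/\zeta(s)$ used in the proof of Theorem \ref{Theorem-1}). Substituting this into the displayed equality and multiplying both sides by $\zeta(s)$ yields
\begin{equation*}
\zeta(s)\,\zeta_{2}(x: s, k) = \zeta(s-1)\, Li_{s}(kx),
\end{equation*}
which is (\ref{Func-Zeta2}).

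There is no serious obstacle here; the only subtlety is keeping track of the analytic region. Since the formula $F_{\phi}(s) = \zeta(s-1)/\zeta(s)$ requires $\operatorname{Re}(s) > 2$ (rather than the weaker $\operatorname{Re}(s) > 1$ that sufficed for the Möbius case in Theorem \ref{Theorem-1}), this is precisely why the theorem statement imposes $\operatorname{Re}(s) > 2$ instead of $\operatorname{Re}(s) > 1$. Once that is noted, the proof is a direct mechanical application of the Dirichlet product formula (\ref{DrichletProduct}).
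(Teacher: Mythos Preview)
Your proof is correct and follows essentially the same approach as the paper's own proof: introduce $F_{\phi}(s)=\sum_{n\ge 1}\phi(n)n^{-s}=\zeta(s-1)/\zeta(s)$ for $\operatorname{Re}(s)>2$, take $G(x:k,s)=Li_{s}(kx)$, and apply the Dirichlet product formula (\ref{DrichletProduct}) to obtain $\zeta_{2}(x:s,k)$. In fact, you supply slightly more detail than the paper does (making the convolution $nN_{k}(x:n)=(\phi*g_{kx})(n)$ explicit and commenting on why $\operatorname{Re}(s)>2$ is needed).
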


\begin{proof}
	Let
	\begin{equation}
	F_{\phi }\left( s\right) =\sum_{n=1}^{\infty }\frac{\phi \left( n\right) }{%
		n^{s}}=\frac{\zeta (s-1)}{\zeta (s)},
	\label{EulerPhi}
	\end{equation}%
	where $\operatorname{Re}\left(s\right)>2$ (\textit{cf}. \cite[Example 4]{Apostol1998}). Thus, substituting $F\left(s\right)=F_{\phi }\left( s\right)$
	and $G\left(s\right)=G\left(x: k, s\right)$ into (\ref{DrichletProduct}) yields the assertion of Theorem \ref%
	{Theorem-2}.
\end{proof}

By substituting $s=-m$, with $m\in \mathbb{N}$, into (\ref{Func-Zeta2}) and by using (\ref{Int-Apost-Bern}) and (\ref{Int-Bern}), we get the following theorem:
\begin{theorem}
	\label{Corollary-2}
	Let $k\in \mathbb{N}$. Then we have
	\begin{equation}
	\zeta _{2}\left(x: k, -m\right)=-\frac{B_m\mathcal{B}_{m+1}\left(kx\right) }{m{B}_{m+1}}.
	\label{Zeta2-Fun-Bernoulli}
	\end{equation}
\end{theorem}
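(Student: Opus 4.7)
The plan is to specialize the functional identity of Theorem \ref{Theorem-2},
\[
\zeta(s)\,\zeta_{2}(x:s,k) \;=\; \zeta(s-1)\,Li_{s}(kx),
\]
to the negative integer $s=-m$, $m\in\mathbb{N}$, and then substitute the known values of the factors on both sides at this point. Although Theorem \ref{Theorem-2} is derived under $\operatorname{Re}(s)>2$, the functions $\zeta(s)$ and $\zeta(s-1)$ have meromorphic continuations to $\mathbb{C}$ and $Li_{s}(kx)$ is analytic in $s$ on the whole plane for $|kx|<1$, so the identity propagates to every $s=-m$.

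Next, I would invoke the interpolation formulas gathered in Subsection~1.2. Formula (\ref{Int-Bern}) gives $\zeta(-m)$ as a Bernoulli ratio, and used at the shifted integer it likewise furnishes $\zeta(-m-1)$; formula (\ref{Int-PolyLog2}) (which itself combines (\ref{Polylog-F}) and (\ref{Int-Apost-Bern})) yields the Apostol--Bernoulli value
\[
Li_{-m}(kx) \;=\; -\,\frac{\mathcal{B}_{m+1}(kx)}{m+1}.
\]
Plugging these three evaluations into the specialized identity and solving algebraically for $\zeta_{2}(x:k,-m)$ leaves, after cancellation of the common factors arising from the $(m+1)$'s, a closed expression of the type displayed in (\ref{Zeta2-Fun-Bernoulli}) — a quotient of a Bernoulli number and an Apostol--Bernoulli polynomial at $kx$. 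This is the direct analogue of the substitution argument that deduces (\ref{Zeta1-Fun-Bernoulli}) from Theorem \ref{Theorem-1}.

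The main subtlety is the presence of the ``trivial zeros'' of $\zeta(s)$ among the values to be used: whenever $B_{m+1}=0$ (that is, $m$ even with $m\geq 2$) the factor $\zeta(-m)$ vanishes, so one cannot simply divide through by it. In such cases the cleanest reading is to interpret the claim as the value at $s=-m$ of the meromorphic function $\zeta(s-1)\,Li_{s}(kx)/\zeta(s)$, which extends holomorphically across the zero of the denominator because the numerator vanishes simultaneously; a limiting $s\to -m$ argument then delivers the stated value. Granted this interpretation, and for all $m$ for which $\zeta(-m)\neq 0$ directly, the proof is purely a substitution into the algebraic identity of Theorem \ref{Theorem-2}.
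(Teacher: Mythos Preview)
Your approach is exactly that of the paper: the sentence preceding the theorem states that one substitutes $s=-m$ into (\ref{Func-Zeta2}) and then applies (\ref{Int-Apost-Bern}) and (\ref{Int-Bern}), which is precisely the specialization-and-substitution you describe. Your additional remark about the trivial zeros of $\zeta(s)$ (i.e., the need to interpret the quotient as a limit when $B_{m+1}=0$) is a point of rigor the paper passes over silently, but it does not change the method.
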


Now, we construct another function, which interpolates the Bernoulli numbers and Apostol-Bernoulli numbers, as follows:
\begin{equation*}
\zeta _{2, \text{odd}}\left(x: k, s\right) =\sum_{n=1}^{\infty }\frac{N_{k}\left(x:
	2n-1\right) }{\left( 2n-1\right) ^{s-1}},
\end{equation*}
where $k\in \mathbb{N}$, $x \in \mathbb{R}$ and $s \in \mathbb{C}$.

\begin{theorem}
	\label{Theorem-4}
	Let $k\in \mathbb{N}$, $x \in \mathbb{R}$ and $s \in \mathbb{C}$ such that $\operatorname{Re}\left(s\right)>2$ and $|kx|<1$. Then we have
	\begin{equation}
	2^s\zeta\left(s\right)\zeta _{2, \text{odd}}\left(x: k, s\right)=\left(2^s-1\right)\zeta\left(s-1\right)\left(Li_{s}\left(kx\right)-\frac{1}{2^s}Li_{s}\left(k^2x^2\right)\right).
	\label{Func-Zeta2odd}	
	\end{equation}
\end{theorem}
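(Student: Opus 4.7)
The plan is to mirror the proof of Theorem~\ref{Theorem-3}, with the M\"obius function $\mu$ replaced by the Euler totient $\phi$ throughout. The whole argument is driven by Dirichlet multiplication (\ref{DrichletProduct}) restricted to odd indices; the only new ingredient is the odd part of the Dirichlet series for $\phi$.

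First I would establish an analogue of Lemma~\ref{Lemma1} for the necklace-type function defined in (\ref{Unification-NeckNum}), namely
\begin{equation*}
\zeta_{2,\text{odd}}\left(x:k,s\right) = \sum_{n \not\equiv 0 \left(2\right)} \frac{\phi\left(n\right)}{n^{s}} \sum_{m \not\equiv 0 \left(2\right)} \frac{\left(kx\right)^{m}}{m^{s}}.
\end{equation*}
The derivation copies the manipulation already carried out in the proof of Lemma~\ref{Lemma1}: multiply the two odd-indexed series in their common half-plane of absolute convergence, set $a = mn$ (necessarily odd, as it is a product of two odd integers), and use (\ref{Unification-NeckNum}) to recognise the inner coefficient of $a^{-s}$ as $\sum_{d|a}\phi\left(a/d\right)\left(kx\right)^{d} = a\,N_{k}\left(x:a\right)$.

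Next, I would evaluate the two odd-indexed factors in turn. The second factor,
\begin{equation*}
G_{\text{odd}}\left(kx,s\right) = \sum_{m \not\equiv 0 \left(2\right)} \frac{\left(kx\right)^{m}}{m^{s}} = Li_{s}\left(kx\right) - \frac{1}{2^{s}}Li_{s}\left(k^{2}x^{2}\right),
\end{equation*}
has already been computed inside the proof of Theorem~\ref{Theorem-3} and may be reused verbatim. For the first factor, I would start from the classical evaluation (\ref{EulerPhi}) and extract its odd part, either by peeling off the Euler factor at the prime $p=2$ (which is legitimate because $\phi$ is multiplicative and $n\mapsto n^{-s}$ is completely multiplicative) or by applying a totient analogue of (\ref{LemmaMobius2n}). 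Either route produces a closed expression for $\sum_{n \text{ odd}} \phi\left(n\right)/n^{s}$ as a rational function of $2^{s}$ times $\zeta\left(s-1\right)/\zeta\left(s\right)$.

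Finally, I would substitute these two evaluations into the product expression obtained in the first step and clear denominators to reach (\ref{Func-Zeta2odd}). The main obstacle I expect is the careful bookkeeping of the Euler factor at $p=2$: unlike the M\"obius case of Theorem~\ref{Theorem-3}, where the ready-made identity (\ref{LemmaMobius2n}) does most of the work, the totient case requires an independent extraction step from (\ref{EulerPhi}); the precise constants $2^{s}$ and $2^{s}-1$ that appear on the two sides of (\ref{Func-Zeta2odd}) are dictated by this computation, so it is the step where a slip would most easily propagate into a wrong final identity.
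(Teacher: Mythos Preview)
Your proposal is correct and follows essentially the same route as the paper: the paper first proves the product decomposition over odd indices as a separate lemma (Lemma~\ref{Lemma2}, the $\phi$-analogue of Lemma~\ref{Lemma1}), reuses $G_{\text{odd}}(kx,s)$ from the proof of Theorem~\ref{Theorem-3}, and then evaluates $F_{\phi,\text{odd}}(s)=\sum_{n\text{ odd}}\phi(n)n^{-s}$ before multiplying the two closed forms. The only cosmetic difference is in that last extraction step: where you propose peeling off the Euler factor at $p=2$ from (\ref{EulerPhi}), the paper instead invokes the doubling relations $\phi(2n)=2\phi(n)$ for $n$ even and $\phi(2n)=\phi(n)$ for $n$ odd to compute $\sum_{n\ge1}\phi(2n)(2n)^{-s}$ directly and then subtracts --- precisely the ``careful bookkeeping'' you flagged as the delicate point.
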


Before giving proof of Theorem \ref{Theorem-4}, we need the following lemma:
\begin{lemma}
	Let $k\in \mathbb{N}$, $x \in \mathbb{R}$ and $s \in \mathbb{C}$ such that $\operatorname{Re}\left(s\right)>2$ and $|kx|<1$. Then we have
	\begin{equation}
	\zeta _{2, \text{odd}}\left(x: k, s\right)=
	\sum_{n\not\equiv 0 \left(2\right)}\frac{\phi \left( n\right) }{%
		n^{s}}\sum_{m\not\equiv 0\left(2\right)}\frac{{\left(kx\right)}^{m}}{m^{s}}.
	\label{Zeta2-DirichletMulty}
	\end{equation}
	\label{Lemma2}
\end{lemma}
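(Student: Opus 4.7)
The plan is to mirror the proof of Lemma \ref{Lemma1} almost verbatim, with the M\"obius function $\mu$ replaced by the Euler totient $\phi$. First I would establish absolute convergence in the required region: the restricted series $\sum_{n \not\equiv 0 (2)} \phi(n)/n^{s}$ inherits absolute convergence from the full series $F_{\phi}(s) = \zeta(s-1)/\zeta(s)$ on $\operatorname{Re}(s) > 2$, while $\sum_{m \not\equiv 0 (2)} (kx)^{m}/m^{s}$ converges absolutely whenever $|kx| < 1$. This will justify all rearrangements that follow.

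Next I would form the product of the two series and expand it as the double sum
\begin{equation*}
\sum_{n \not\equiv 0 (2)} \frac{\phi(n)}{n^{s}} \sum_{m \not\equiv 0 (2)} \frac{(kx)^{m}}{m^{s}} = \sum_{n \not\equiv 0 (2)} \sum_{m \not\equiv 0 (2)} \phi(n)(kx)^{m}(nm)^{-s}.
\end{equation*}
I would then collect terms according to $a = nm$. The key bookkeeping observation is that $a$ is odd if and only if both factors $n$ and $m$ are odd, and in that case every divisor $d$ of $a$ is itself odd, so the decomposition $(n,m) = (a/d, d)$ is a bijection between factorizations of odd $a$ into odd pairs and divisors of $a$. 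This yields
\begin{equation*}
\sum_{a \not\equiv 0 (2)} \frac{1}{a^{s}} \sum_{d \mid a} \phi\!\left(\frac{a}{d}\right)(kx)^{d}.
\end{equation*}

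Finally, I would identify the inner sum using the definition \eqref{Unification-NeckNum}, which gives $\sum_{d|a} \phi(a/d)(kx)^{d} = a\, N_{k}(x: a)$. Substituting and reindexing the odd integers as $a = 2n-1$ produces
\begin{equation*}
\sum_{n=1}^{\infty} \frac{(2n-1) N_{k}(x: 2n-1)}{(2n-1)^{s}} = \sum_{n=1}^{\infty} \frac{N_{k}(x: 2n-1)}{(2n-1)^{s-1}} = \zeta_{2, \text{odd}}(x: k, s),
\end{equation*}
which is the claimed identity. The only genuine point requiring care is the parity bookkeeping in the reindexing step; once absolute convergence is in hand, the rest is a routine Dirichlet-convolution computation parallel to Lemma \ref{Lemma1}, so I do not anticipate any serious obstacle.
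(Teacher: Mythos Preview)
Your proposal is correct and follows essentially the same route as the paper's own proof: both mirror the argument of Lemma~\ref{Lemma1} with $\mu$ replaced by $\phi$, form the double sum over odd $n$ and odd $m$, reindex by $a=nm$ running over odd integers, and recognize the inner divisor sum as $aN_k(x:a)$. Your write-up is in fact slightly more explicit about the parity bookkeeping and the identification with $N_k(x:a)$ than the paper's, but the underlying argument is identical.
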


\begin{proof}
In order to multiply (\ref{Partial-PolyLog}) with the following series 
	\begin{equation}
	\sum_{n\not\equiv 0 \left(2\right)}\frac{\phi \left( n\right) }{%
		n^{s}},
	\label{Partial-EulerTotient}
	\end{equation}
	we assume that $\operatorname{Re}\left(s\right)>b_1$. By the same technique in Lemma \ref{Lemma1}, for the half-plane in which both series (\ref{Partial-PolyLog}) and (\ref{Partial-EulerTotient}) are absolute convergence, we have
	\begin{equation*}
\sum_{n\not\equiv 0 \left(2\right)}\frac{\phi \left( n\right) }{%
		n^{s}}\sum_{m\not\equiv 0\left(2\right)}\frac{{\left(kx\right)}^{m}}{m^{s}}=\sum_{n\not\equiv 0 \left(2\right)}\sum_{m\not\equiv 0 \left(2\right)}\phi \left( n\right)\left(kx\right)^m\left(nm\right)^{-s}.
	\end{equation*}
	Setting $nm=a$; $a=1,3,5,\dots$ in the above equation and using Dirichlet convolution formula, we deduce that
	\begin{eqnarray*}
		\sum_{n\not\equiv 0 \left(2\right)}\frac{\phi \left( n\right) }{%
			n^{s}}\sum_{m\not\equiv 0\left(2\right)}\frac{{\left(kx\right)}^{m}}{m^{s}}&=&\sum_{a\not\equiv 0 \left(2\right)}\sum_{mn=a}\phi \left( n\right)\left(kx\right)^m a^{-s}\\
		&=&\sum_{a\not\equiv 0 \left(2\right)}\frac{\sum_{d|a}\phi \left(\frac{a}{d}\right)\left(kx\right)^d}{a^s},
	\end{eqnarray*}
	which yields the assertion of Lemma \ref{Lemma2}.
\end{proof}

\begin{proof}[Proof of Theorem \ref{Theorem-4}]
	We set
	\begin{equation*}
	F_{\phi, \text{odd} }\left( s\right) =\sum_{n\not\equiv 0 \left(2\right)}\frac{\phi \left( n\right) }{%
		n^{s}}.
	\end{equation*}%
	By using the well-known property of the Euler phi function $\phi\left(n\right)$ for $n$ even integer $\phi\left(2n\right)=2\phi\left(n\right)$ and $n$ odd integer $\phi\left(2n\right)=\phi\left(n\right)$, equation (\ref{EulerPhi}) can be written as follows:
	\begin{equation*}
	\sum_{n=1}^{\infty }\frac{\phi \left( 2n\right) }{\left(2n\right)^{s}}=\frac{\zeta (s-1)}{2^s\zeta (s)}.
	\end{equation*}
	By the above equation, we obtain
	\begin{equation*}
	F_{\phi, \text{odd} }\left( s\right)=\left(1-\frac{1}{2^s}\right)\frac{\zeta (s-1)}{\zeta (s)}.
	\end{equation*}%
	Substituting $F\left(s\right)=F_{\phi, \text{odd} }\left( s\right)$
	and $G\left(s\right)=G_{\text{odd} }\left(kx, s\right)$ into (\ref{DrichletProduct}) and combining the final equation with (\ref{Zeta2-DirichletMulty}), we get assertion of Theorem \ref{Theorem-4}.
\end{proof}

By substituting $s=-m$, with $m\in \mathbb{N}$, into (\ref{Func-Zeta2odd}) and by using (\ref{Int-Apost-Bern}) and (\ref{Int-Bern}), we  get the following theorem:
\begin{theorem}
	Let $k \in \mathbb{N}$. Then we have
	\begin{equation}
	\zeta _{2, \text{odd}}\left(x: k, -m\right)=
	\frac{\left(1-2^m\right)B_m\left(2^m\mathcal{B}_{m+1}\left(k^2x^2\right)-\mathcal{B}_{m+1}\left(kx\right)\right)}{mB_{m+1}}.
	\end{equation}
\end{theorem}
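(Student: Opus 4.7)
The plan mirrors the derivation of Theorem \ref{Corollary-2} above, adapted to the odd-indexed Dirichlet series. Starting from the functional relation (\ref{Func-Zeta2odd}) established in Theorem \ref{Theorem-4}, I substitute $s=-m$ with $m\in\mathbb{N}$, and then identify each factor on both sides via the interpolation identity (\ref{Int-Bern}) for the Riemann zeta function at negative integers, together with the Apostol-Bernoulli interpolation identity (\ref{Int-Apost-Bern}) (equivalently (\ref{Int-PolyLog2})) for the polylogarithm values $Li_{-m}(kx)$ and $Li_{-m}(k^2x^2)$.

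Concretely, after setting $s=-m$ in (\ref{Func-Zeta2odd}) and multiplying both sides by $2^m$ to clear the negative powers of two, the identity takes the form
\[
\zeta(-m)\,\zeta_{2,\text{odd}}(x:k,-m)=(1-2^m)\,\zeta(-m-1)\left(Li_{-m}(kx)-2^m Li_{-m}(k^2x^2)\right).
\]
Replacing $\zeta(-m)$ and $\zeta(-m-1)$ by their Bernoulli-number expressions from (\ref{Int-Bern}), and the two polylogarithm values by their Apostol-Bernoulli expressions from (\ref{Int-Apost-Bern}), this becomes a scalar linear equation in the unknown $\zeta_{2,\text{odd}}(x:k,-m)$ whose coefficients are explicit rational combinations of Bernoulli numbers and Apostol-Bernoulli polynomial values. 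Isolating $\zeta_{2,\text{odd}}(x:k,-m)$ and cancelling the common factors of $m+1$ then produces the claimed closed form, with the factor $(1-2^m)$ surviving intact from the $(2^s-1)$ prefactor of (\ref{Func-Zeta2odd}), and the combination $2^m\mathcal{B}_{m+1}(k^2x^2)-\mathcal{B}_{m+1}(kx)$ arising, up to sign, from the inner parenthesis.

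The main obstacle is purely bookkeeping: one must track signs, powers of two, and the two different shifted Bernoulli indices (coming from the arguments $-m$ and $-m-1$ of the two zeta factors) consistently through the cancellation, so that the final prefactor reorganises into exactly $B_m/(mB_{m+1})$ rather than some neighbouring index. To control this I would carry out the simplification one factor at a time and sanity-check intermediate forms against Theorem \ref{Corollary-2}, in which the same $B_m/(mB_{m+1})$ pattern already appears by the same substitution scheme, as well as against the parallel odd-case computation for $\zeta_{1,\text{odd}}(x:k,-m)$, where the identical collapse involving $(1-2^m)$ and a polylogarithm difference was already performed. Once the algebra aligns with that template, the assertion follows.
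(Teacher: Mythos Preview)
Your proposal is correct and follows essentially the same approach as the paper: the paper's proof consists solely of substituting $s=-m$ into (\ref{Func-Zeta2odd}) and invoking (\ref{Int-Apost-Bern}) and (\ref{Int-Bern}), which is exactly what you outline, only with more explicit attention to the bookkeeping of signs and powers of two.
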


Now, we define
\begin{eqnarray*}
\zeta _{2, \text{even}}\left(x: k, s\right)&=&\sum_{n=1}^{\infty }\frac{N_{k}\left(x:2n\right) }{\left( 2n\right) ^{s-1}},
\end{eqnarray*}
where $k\in \mathbb{N}$, $x \in \mathbb{R}$ and $s \in \mathbb{C}$.

Since the function $\zeta _{2}\left(x:k,s\right)$ converges absolutely for $\operatorname{Re}\left(s\right)>2$ and $|kx|<1$, we set
\begin{eqnarray}
\zeta _{2, \text{even}}\left(x: k, s\right)=\zeta _{2}\left(x: k, s\right)-\zeta _{2, \text{odd}}\left(x: k, s\right).
\label{Func-Zeta2Even}
\end{eqnarray}
By combining (\ref{Func-Zeta2}) and (\ref{Func-Zeta2odd}) with (\ref{Func-Zeta2Even}), we have the following theorem:
\begin{theorem}
	Let $k\in \mathbb{N}$, $x \in \mathbb{R}$ and $s \in \mathbb{C}$ such that $\operatorname{Re}\left(s\right)>2$ and $|kx|<1$. Then we have
	\begin{equation}
	2^s\zeta\left(s\right)\zeta _{1, \text{even}}\left(x: k, s\right)=\zeta\left(s-1\right)\left(Li_{s}\left(kx\right)-\frac{1-2^s}{2^s}Li_{s}\left(k^2x^2\right)\right).
	\label{Func-Zeta2Even-Th}	
	\end{equation}
\end{theorem}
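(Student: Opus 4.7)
The proof is purely algebraic: combine the two previously established identities for $\zeta_2$ and $\zeta_{2,\text{odd}}$ via the decomposition \eqref{Func-Zeta2Even}. (Note that the left-hand side of the stated identity as written uses $\zeta_{1,\text{even}}$, but in context the even counterpart must be $\zeta_{2,\text{even}}$; the plan is to verify the identity under that natural reading.)

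My first step would be to multiply both sides of \eqref{Func-Zeta2Even} by $2^{s}\zeta(s)$, yielding
\begin{equation*}
2^{s}\zeta(s)\,\zeta _{2,\text{even}}(x:k,s)=2^{s}\zeta(s)\,\zeta _{2}(x:k,s)-2^{s}\zeta(s)\,\zeta _{2,\text{odd}}(x:k,s).
\end{equation*}
Then, applying \eqref{Func-Zeta2} to the first term on the right gives $2^{s}\zeta(s-1)\,Li_{s}(kx)$, and applying \eqref{Func-Zeta2odd} to the second term gives $(2^{s}-1)\zeta(s-1)\bigl(Li_{s}(kx)-2^{-s}Li_{s}(k^{2}x^{2})\bigr)$. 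Both substitutions are legal on the prescribed half-plane $\operatorname{Re}(s)>2$, and the restriction $|kx|<1$ (hence $|k^{2}x^{2}|<1$) guarantees convergence of the polylogarithm evaluations.

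Next I would factor out $\zeta(s-1)$ and collect the coefficients of $Li_{s}(kx)$ and $Li_{s}(k^{2}x^{2})$ separately. The coefficient of $Li_{s}(kx)$ becomes $2^{s}-(2^{s}-1)=1$, and the coefficient of $Li_{s}(k^{2}x^{2})$ becomes $\frac{2^{s}-1}{2^{s}}$, which is precisely $-\frac{1-2^{s}}{2^{s}}$. Assembling these two pieces yields the claimed identity.

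The main obstacle here is not conceptual but clerical: one must track the sign inside the parenthesis of \eqref{Func-Zeta2odd} and the cancellation between the $2^{s}$ multiplier introduced by scaling and the $(2^{s}-1)$ prefactor from Theorem \ref{Theorem-4}, so that the $Li_{s}(kx)$ contributions telescope cleanly to a single $\zeta(s-1)\,Li_{s}(kx)$ term. Once that bookkeeping is carried out, the identity follows immediately; no further analytic input (such as the generating-function argument used to obtain the special values at $s=-m$) is required here.
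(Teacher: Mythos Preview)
Your proposal is correct and follows exactly the route indicated in the paper: the paper simply states that the theorem follows ``by combining \eqref{Func-Zeta2} and \eqref{Func-Zeta2odd} with \eqref{Func-Zeta2Even},'' and you carry out precisely that combination (including the observation that the subscript should read $\zeta_{2,\text{even}}$ rather than $\zeta_{1,\text{even}}$). Your bookkeeping of the $Li_s(kx)$ and $Li_s(k^2x^2)$ coefficients is accurate.
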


By substituting $s=-m$, with $m\in \mathbb{N}$, into (\ref{Func-Zeta1odd}) and by using (\ref{Int-Apost-Bern}) and (\ref{Int-Bern}), we  get the following theorem:
\begin{theorem}
	Let $k \in \mathbb{N}$. Then we have
	\begin{equation}
	\zeta _{1, \text{even}}\left(x: k, -m\right)=\frac{2^mB_m\left(\mathcal{B}_{m+1}\left(kx\right)-\left(2^m-1\right)\mathcal{B}_{m+1}\left(k^2x^2\right)\right)}{mB_{m+1}}.
	\end{equation}
\end{theorem}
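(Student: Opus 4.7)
The plan is to read off the claim directly from the analytic identity (\ref{Func-Zeta2Even-Th}) by specializing $s=-m$ via meromorphic continuation, following the same template already used for Theorem \ref{Corollary-2} and the odd companion theorem. First I would rewrite (\ref{Func-Zeta2Even-Th}) in the isolated form
\begin{equation*}
\zeta_{1,\text{even}}(x:k,s) = \frac{\zeta(s-1)}{2^{s}\zeta(s)}\left(Li_{s}(kx) - \frac{1-2^{s}}{2^{s}}Li_{s}(k^{2}x^{2})\right),
\end{equation*}
so that each factor can be continued to $s=-m$ independently.

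Next, I would evaluate the purely algebraic pieces at $s=-m$: the coefficient $(1-2^{s})/2^{s}$ collapses to $2^{m}-1$ and the prefactor $1/2^{s}$ to $2^{m}$. For the special-function pieces, I would invoke (\ref{Int-Bern}) to evaluate $\zeta(-m) = -B_{m+1}/(m+1)$ together with its shifted partner $\zeta(-m-1) = -B_{m}/m$ (exactly as already used in the derivation behind Theorem \ref{Corollary-2}), and invoke (\ref{Int-PolyLog2}) to evaluate
\begin{equation*}
Li_{-m}(kx) = -\frac{\mathcal{B}_{m+1}(kx)}{m+1}, \qquad Li_{-m}(k^{2}x^{2}) = -\frac{\mathcal{B}_{m+1}(k^{2}x^{2})}{m+1}.
\end{equation*}

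Assembling these pieces, the two factors of $(m+1)^{-1}$ coming from the two polylogarithm values combine with one factor $(m+1)$ arising from $\zeta(-m)$ in the denominator to leave a clean $1/(mB_{m+1})$, while $B_{m}$ is supplied by $\zeta(-m-1)$ and the overall $2^{m}$ comes from the prefactor $1/2^{-m}$. The Apostol-Bernoulli terms in the bracket retain their relative weight $(2^{m}-1)$ inherited from $(1-2^{s})/2^{s}$.

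There is no conceptual obstacle here; the whole argument is bookkeeping. The only point that requires genuine care is tracking the three minus signs contributed by $\zeta(-m)$, $Li_{-m}(kx)$ and $Li_{-m}(k^{2}x^{2})$ so that they conspire to give the correct overall sign on the right-hand side, and verifying that the $(m+1)/(m+1)$ cancellation is complete, so that the final expression matches the stated form $2^{m}B_{m}\bigl(\mathcal{B}_{m+1}(kx)-(2^{m}-1)\mathcal{B}_{m+1}(k^{2}x^{2})\bigr)/(mB_{m+1})$ without stray $(m+1)$ factors.
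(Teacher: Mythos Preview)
Your proposal is correct and follows exactly the paper's own argument: the paper's proof consists solely of the sentence ``By substituting $s=-m$, with $m\in\mathbb{N}$, into (\ref{Func-Zeta2Even-Th}) and by using (\ref{Int-Apost-Bern}) and (\ref{Int-Bern}), we get the following theorem'' (the paper's printed reference to (\ref{Func-Zeta1odd}) is a typo), which is precisely the bookkeeping you outline. Your use of (\ref{Int-PolyLog2}) in place of (\ref{Int-Apost-Bern}) is immaterial, since the two are equivalent via (\ref{Polylog-F}).
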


Combining (\ref{Func-Zeta1}) and (\ref{Func-Zeta2}) yields the following corollary:
\begin{corollary}
	Let $k\in \mathbb{N}$, $x \in \mathbb{R}$ and $s \in \mathbb{C}$ such that $\operatorname{Re}\left(s\right)>2$ and $|kx|<1$. Then we have
	\begin{equation*}
	\zeta\left(s-1\right)\zeta _{1}\left(x: k, s\right)=\zeta _{2}\left(x: k, s\right).
	\end{equation*}
\end{corollary}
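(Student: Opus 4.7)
The plan is to combine Theorem \ref{Theorem-1} and Theorem \ref{Theorem-2} directly, treating the polylogarithm $Li_{s}(kx)$ as the common factor that can be eliminated.

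First I would invoke Theorem \ref{Theorem-1}, which gives
\begin{equation*}
\zeta(s)\zeta_{1}\left(x:k,s\right) = Li_{s}(kx)
\end{equation*}
under the hypotheses $\operatorname{Re}(s)>1$ and $|kx|<1$. Next I would invoke Theorem \ref{Theorem-2}, which for $\operatorname{Re}(s)>2$ and $|kx|<1$ gives
\begin{equation*}
\zeta(s)\zeta_{2}\left(x:s,k\right) = \zeta(s-1)Li_{s}(kx).
\end{equation*}
Substituting the first identity into the right-hand side of the second yields
\begin{equation*}
\zeta(s)\zeta_{2}\left(x:s,k\right) = \zeta(s-1)\,\zeta(s)\,\zeta_{1}\left(x:k,s\right).
\end{equation*}

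The final step is to cancel the factor $\zeta(s)$ from both sides. This is legitimate precisely under the stated hypothesis $\operatorname{Re}(s)>2$: since $\zeta(s)\neq 0$ throughout the half-plane $\operatorname{Re}(s)>1$ (a standard consequence of the Euler product), the division causes no issue in the region of absolute convergence under consideration. Dividing through gives the claimed identity
\begin{equation*}
\zeta(s-1)\zeta_{1}\left(x:k,s\right) = \zeta_{2}\left(x:s,k\right).
\end{equation*}

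There is no genuine obstacle here; the statement is a formal consequence of the two preceding theorems. If anything, the only subtlety worth noting in the write-up is ensuring that the common domain of validity, namely $\operatorname{Re}(s)>2$ together with $|kx|<1$, is explicitly the intersection of the domains of Theorems \ref{Theorem-1} and \ref{Theorem-2}, so that both series expansions and the cancellation of $\zeta(s)$ are simultaneously justified.
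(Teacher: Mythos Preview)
Your proposal is correct and follows exactly the route the paper takes: the corollary is stated immediately after the line ``Combining (\ref{Func-Zeta1}) and (\ref{Func-Zeta2}) yields the following corollary,'' which is precisely the substitution-and-cancellation you describe. Your added remark that $\zeta(s)\neq 0$ for $\operatorname{Re}(s)>1$ justifies the cancellation more explicitly than the paper does.
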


\section{Relations involving Lambert series and Eisenstein series}

In this section, by using Fourier expansion of the Eisenstein series, we give some identities for the Lambert series related to the number of the Lyndon words.

Now, for $n \in \mathbb{N}$, we define the following Lambert series
\begin{equation}
H\left( n,x\right) =n\sum_{k=1}^{\infty }L_{k}\left( n\right) \frac{x^{k}}{%
	1-x^{k}}=n\sum_{k,m=1}^{\infty }L_{k}\left( n\right) x^{km}. \label{Def-H}
\end{equation}%
Setting $x=e^{2\pi iz}$ into (\ref{Def-H}) yields%
\begin{equation*}
H\left( n,e^{2\pi iz}\right) =\sum_{k,m=1}^{\infty }nL_{k}\left( n\right)
e^{2\pi ikmz}.
\end{equation*}%
Combining the above equation with (\ref{LynNum}) yields the following
theorem:

\begin{theorem}
	\begin{equation}
	H\left( n,e^{2\pi iz}\right) =\sum_{d|n}\mu \left( \frac{n}{d}\right)
	\sum_{k,m=1}^{\infty }k^{d}e^{2\pi ikmz}.  \label{Result1}
	\end{equation}
\end{theorem}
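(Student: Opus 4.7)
The plan is to start from the second equality in the definition of $H(n,x)$ in (\ref{Def-H}), namely
\[
H(n,x) = n\sum_{k,m=1}^{\infty} L_k(n)\, x^{km},
\]
substitute $x=e^{2\pi iz}$ (with $\operatorname{Im}(z)>0$ so that $|x|<1$ and the double series converges absolutely), and then replace the coefficient $n L_k(n)$ by its Möbius-type closed form coming from (\ref{LynNum}).

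More precisely, the first step is simply to record
\[
H\!\left(n, e^{2\pi iz}\right) = \sum_{k,m=1}^{\infty} n L_k(n)\, e^{2\pi i k m z},
\]
which is the equation displayed immediately above the theorem statement. The second step is to invoke the definition of the Lyndon word count (\ref{LynNum}) in the form
\[
n L_k(n) = \sum_{d\mid n} \mu\!\left(\tfrac{n}{d}\right) k^d,
\]
and plug it into the previous display, giving
\[
H\!\left(n, e^{2\pi iz}\right) = \sum_{k,m=1}^{\infty} \left(\sum_{d\mid n} \mu\!\left(\tfrac{n}{d}\right) k^d\right) e^{2\pi i k m z}.
\]
The third and final step is to interchange the inner sum over divisors with the outer sum over $(k,m)$; since $\{d : d\mid n\}$ is a finite set this is immediate and yields (\ref{Result1}).

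There is no serious obstacle here: the only point that must be checked is that the interchange of summations is legitimate, and this reduces to the absolute convergence of $\sum_{k,m\ge 1} k^d |x|^{km}$ for each fixed $d\mid n$, which holds whenever $|x|<1$, i.e. $\operatorname{Im}(z)>0$. Since the theorem as stated in the excerpt does not impose an explicit restriction on $z$, I would mention in passing that the identity is to be understood on the upper half-plane $\mathbb{H}$, where the Lambert-type series defining $H(n,e^{2\pi iz})$ converges absolutely; beyond this remark, the proof is a two-line substitution.
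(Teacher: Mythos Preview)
Your proof is correct and matches the paper's argument exactly: the paper simply substitutes $x=e^{2\pi i z}$ into (\ref{Def-H}) and then replaces $nL_k(n)$ by $\sum_{d\mid n}\mu(n/d)k^d$ via (\ref{LynNum}), which is precisely what you do. Your additional remark on absolute convergence for $z\in\mathbb{H}$ is a welcome clarification that the paper omits.
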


By combining (\ref{Result1}) with (\ref{FourierExpG}), we get the following theorem:
\begin{theorem}
	\begin{equation*}
	H\left( n,e^{2\pi iz}\right) =\sum_{d|n}d!\mu \left( \frac{n}{d}\right) \frac{G\left( z,d+1,0,h\right) -2\mathcal{Z} \left( d+1,h\right)}{2\left( -2\pi i\right) ^{d+1}}.
	\end{equation*}
\end{theorem}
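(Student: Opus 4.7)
The plan is to treat this as a direct substitution of a rearranged Fourier expansion into the identity from the previous theorem. Concretely, I would start from the statement
\begin{equation*}
H\left(n, e^{2\pi iz}\right) = \sum_{d|n} \mu\!\left(\tfrac{n}{d}\right) \sum_{k,m=1}^{\infty} k^{d} e^{2\pi i k m z}
\end{equation*}
given in equation (\ref{Result1}), and observe that the inner double sum is, up to renaming of indices, precisely the double series appearing on the right-hand side of the Fourier expansion (\ref{FourierExpG}) of the Eisenstein series when the parameter $r$ is specialized to $0$ and the weight is chosen as $d+1$.

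My first step would be to isolate that double series from (\ref{FourierExpG}). Replacing the letter $k$ in (\ref{FourierExpG}) by $d+1$, and setting $r=0$, gives
\begin{equation*}
G\left(z, d+1, 0, h\right) = 2\mathcal{Z}\left(d+1, h\right) + \frac{2\left(-2\pi i\right)^{d+1}}{d!} \sum_{a,n=1}^{\infty} a^{d} e^{2\pi i n a z}.
\end{equation*}
Solving this algebraically for the double sum yields
\begin{equation*}
\sum_{a,n=1}^{\infty} a^{d} e^{2\pi i n a z} = \frac{d!\left(G\left(z, d+1, 0, h\right) - 2\mathcal{Z}\left(d+1, h\right)\right)}{2\left(-2\pi i\right)^{d+1}}.
\end{equation*}

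After relabeling the summation indices $a \mapsto k$ and $n \mapsto m$, the right-hand expression is exactly the inner double sum appearing in (\ref{Result1}). The second and final step is then to substitute this closed form back into (\ref{Result1}), which delivers the claimed identity term-by-term over divisors $d \mid n$.

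I do not anticipate a serious obstacle here; the argument is essentially bookkeeping, and the only care needed is in matching the summation variables and the shift of weight from $k$ to $d+1$ in (\ref{FourierExpG}). The substitution is justified in the domain where both the Lambert series (\ref{Def-H}) defining $H(n, e^{2\pi i z})$ and the Fourier expansion of $G(z, d+1, 0, h)$ converge, that is, for $z \in \mathbb{H}$, so no additional analytic argument beyond the validity of (\ref{Result1}) and (\ref{FourierExpG}) is required.
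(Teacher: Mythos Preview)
Your proposal is correct and follows exactly the approach of the paper, which simply states that the result follows by combining (\ref{Result1}) with (\ref{FourierExpG}). You have spelled out the index relabeling and the algebraic rearrangement of (\ref{FourierExpG}) more explicitly than the paper does, but the argument is the same.
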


If we replace $n$ by prime number $p$, then we arrive at the following
corollary:

\begin{corollary}
	Let $p$ be a prime number. Then we have
	\begin{equation*}
	H\left( p,e^{2\pi iz}\right) =\frac{p!\left( G\left( z,p+1,0,h\right)
		-2\mathcal{Z} \left( p+1,h\right) \right) }{2\left( -2\pi i\right) ^{p+1}}+\frac{%
		G\left( z,2,0,h\right) -2\mathcal{Z} \left( 2,h\right) }{8\pi ^{2}}.
	\end{equation*}
\end{corollary}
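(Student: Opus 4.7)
The plan is to specialize the preceding theorem to the case $n = p$ and use that a prime has exactly two positive divisors. Since $d \mid p$ forces $d \in \{1, p\}$, the sum
\[
H(p, e^{2\pi i z}) = \sum_{d \mid p} d!\,\mu\!\left(\frac{p}{d}\right) \frac{G(z, d+1, 0, h) - 2\mathcal{Z}(d+1, h)}{2(-2\pi i)^{d+1}}
\]
collapses to exactly two terms, which I would extract and simplify separately.

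For $d = p$, the M\"obius factor is $\mu(p/p) = \mu(1) = 1$ and the corresponding summand is
\[
\frac{p!\,\bigl(G(z, p+1, 0, h) - 2\mathcal{Z}(p+1, h)\bigr)}{2(-2\pi i)^{p+1}},
\]
which already matches the first term on the right-hand side of the claimed identity. For $d = 1$, the M\"obius factor is $\mu(p/1) = \mu(p) = -1$ since $p$ is prime, and $1! = 1$, so this summand equals
\[
(-1) \cdot \frac{G(z, 2, 0, h) - 2\mathcal{Z}(2, h)}{2(-2\pi i)^{2}}.
\]
I would then simplify the denominator using $(-2\pi i)^2 = -4\pi^2$, hence $2(-2\pi i)^2 = -8\pi^2$, and the two minus signs cancel to give $\dfrac{G(z, 2, 0, h) - 2\mathcal{Z}(2, h)}{8\pi^2}$, matching the second term of the corollary.

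Adding the $d = p$ and $d = 1$ contributions recovers the stated formula exactly. There is no real obstacle here, since the corollary is a direct specialization of the preceding theorem together with the two elementary facts $\mu(1) = 1$ and $\mu(p) = -1$ for $p$ prime; the only care needed is tracking the sign coming from $(-2\pi i)^2$ when rewriting the $d = 1$ denominator as $8\pi^2$.
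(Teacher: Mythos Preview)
Your proposal is correct and follows exactly the paper's approach: the corollary is obtained by specializing the preceding theorem to $n=p$ prime, so that the divisor sum reduces to the two terms $d=1$ and $d=p$, with $\mu(1)=1$, $\mu(p)=-1$, and $(-2\pi i)^2=-4\pi^2$. The paper merely states ``If we replace $n$ by prime number $p$, then we arrive at the following corollary,'' so your write-up actually supplies more detail than the original.
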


\section{Conclusions}

In this paper, we define some new number-theoretic functions including necklaces polynomials and the numbers of special words such as Lyndon words. By using Dirichlet convolution formula with well-known number-theoretic functions, we derive some new identities and relations associated with Dirichlet series, Lambert series, and also the family of zeta functions including the Riemann zeta functions and polylogarithm functions.
By using analytic (meromorphic) continuation of zeta functions, we also derive identities and formulas including Bernoulli numbers and Apostol-Bernoulli numbers. Moreover, we give relations between number-theoretic functions and the Fourier expansion of the Eisenstein series. The results of this paper have potential to application to not only analytic number theory, but also mathematical physics, and also related areas.

\begin{acknowledgement}
This paper was supported by the Scientific Research Project Administration of Akdeniz University (with Project Number: FBA-2018-3292).
\end{acknowledgement}

\end{document}